\providecommand{\U}[1]{\protect\rule{.1in}{.1in}}
\def\equationautorefname~#1\null{(#1)\null}
\theoremstyle{plain}
\newtheorem{algorithm}{Algorithm}[section]
\newtheorem{thm}{Thm}
\newtheorem{corollary}[algorithm]{Corollary}
\newtheorem{definition}[algorithm]{Definition}
\newtheorem*{Remark-delta}{Remark on all things $\protect\delta$}
\newtheorem{lemma}[algorithm]{Lemma}
\newtheorem{theorem} [algorithm] {Theorem}
\newtheorem{theoremlet}[thm]{Theorem}
\newtheorem{lemmalet}[thm]{Lemma}
\newtheorem{proposition}[algorithm]{Proposition}
\theoremstyle{definition}
\newtheorem{example}[algorithm]{Example}
\newtheorem{remark}[algorithm]{Remark}
\newtheorem*{remarknonum}{Remark}
\renewcommand{\Lambda}{\mathcal{L}}
\DeclareMathOperator{\spann}{span}
\DeclareMathOperator{\Trace}{Trace}
\DeclareMathOperator{\Tr}{Trace}
\DeclareMathOperator{\Ric}{Ric}
\DeclareMathOperator{\inx}{index}
\DeclareMathOperator{\Index}{index}
\DeclareMathOperator{\dist}{dist}
\DeclareMathOperator{\foc}{foc}
\DeclareMathOperator{\length}{length}
\DeclareMathOperator{\sect}{sec}
\begin{document}
\title{A Softer Connectivity Principle}
\author{Luis Guijarro}
\address{Department of Mathematics, Universidad Aut\'{o}noma de Madrid, and ICMAT
CSIC-UAM-UCM-UC3M }
\email{luis.guijarro@uam.es}
\urladdr{http://verso.mat.uam.es/~luis.guijarro}
\author{Frederick Wilhelm}
\address{Department of Mathematics\\
University of California\\
Riverside, CA 92521}
\email{fred@math.ucr.edu}
\urladdr{https://sites.google.com/site/frederickhwilhelmjr/home}
\thanks{The first author was supported by research grants MTM2014-57769-3-P and
MTM2017-85934-C3-2-P from the MINECO, and by ICMAT Severo Ochoa project
SEV-2015-0554 (MINECO). }
\thanks{This work was supported by a grant from the Simons Foundation (\#358068,
Frederick Wilhelm)}
\date{\today}
\subjclass[2000]{Primary 53C20}
\keywords{Positive curvature, $k$-th Ricci curvature, connectivity principle, sphere
Theorem, Frankel's Theorem}

\begin{abstract}
We give soft, quantitatively optimal extensions of the classical Sphere
Theorem, Wilking's connectivity principle and Frankel's Theorem to the context
of $\Ric_{k}$ curvature. The hypotheses are soft in the sense that they are
satisfied on sets of metrics that are open in the $C^{2}$--topology.

\end{abstract}
\maketitle

\pdfbookmark[1]{Introduction}{Introduction}

A Riemannian manifold $M$ has $k^{th}$--intermediate Ricci curvature $\geq
\ell$ if for any orthonormal $\left(  k+1\right)  $--frame $\left\{
v,w_{1},w_{2},\ldots,w_{k}\right\}  ,$ the sectional curvature sum,
$\Sigma_{i=1}^{k}\mathrm{sec}\left(  v,w_{i}\right)  ,$ is $\geq\ell$
(\cite{Wu}, \cite{Shen}). For brevity we write $Ric_{k}\,M\geq\ell.$ In this
article, we consider some of the topological implications that positive
$\Ric_{k}$ curvature imposes on a manifold.

We start by combining Berger's proof of the diameter sphere theorem,
\cite{Pet}, with the Jacobi field comparison lemma from \cite{GuijWilh} to
obtain the following generalization of the classical $\frac{1}{4}$--pinched
sphere theorem.


\begin{theoremlet}
\label{Intermeadiate Ricci Thm}Let $M$ be a complete Riemannian $n$--manifold
with $Ric_{k}\geq k.$ If there is a point $p\in M$ with $\mathrm{conj}%
_{p}>\frac{\pi}{2},$ then the universal cover of $M$ is $\left(  n-k\right)
$--connected. In particular, if $k\leq\frac{n}{2},$ then the universal cover
of $M$ is homeomorphic to the sphere.
\end{theoremlet}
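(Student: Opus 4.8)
The plan is to prove the connectivity statement by Morse theory on the path space of the universal cover, using the Jacobi field comparison of \cite{GuijWilh} to force large index on every non-minimal geodesic emanating from a lift of $p$; the homeomorphism assertion is then a formal consequence. First pass to the universal cover $\widetilde M\to M$. Averaging the sectional curvature sum $\sum_{i=1}^{n-1}\mathrm{sec}(v,w_i)$ over all $k$-element subsets of an orthonormal basis of $v^{\perp}$ shows that $\Ric_{k}\ge k$ implies $\Ric\ge n-1$, so by Myers' theorem $M$, and hence $\widetilde M$, is compact. Since the covering is a local isometry, conjugate points lift, so $\mathrm{conj}_{\widetilde p}=\mathrm{conj}_{p}>\pi/2$ for every lift $\widetilde p$ of $p$. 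Thus it suffices to show that a compact, simply connected $n$--manifold $\widetilde M$ with $\Ric_{k}\ge k$ that possesses a point of conjugate radius $>\pi/2$ is $(n-k)$--connected.

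By the path--loop fibration $\Omega\widetilde M\hookrightarrow P\widetilde M\to\widetilde M$ with $P\widetilde M$ contractible, $\widetilde M$ is $(n-k)$--connected exactly when the based loop space $\Omega\widetilde M$ is $(n-k-1)$--connected. Fix the lift $\widetilde p$ and choose $\widetilde q\ne\widetilde p$ outside the cut locus of $\widetilde p$ and not conjugate to $\widetilde p$ along any geodesic; both excluded sets have measure zero. Then $\Omega_{\widetilde p,\widetilde q}\widetilde M\simeq\Omega\widetilde M$, and by Milnor's Morse theory of the energy functional $\Omega_{\widetilde p,\widetilde q}\widetilde M$ has the homotopy type of a CW complex with one cell of dimension $\Index(\gamma)$ for each geodesic $\gamma$ from $\widetilde p$ to $\widetilde q$; the unique minimal geodesic contributes the single $0$--cell. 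Hence $\Omega\widetilde M$ --- and therefore $\widetilde M$ --- has the desired connectivity once one proves the following statement $(\star)$: every geodesic from $\widetilde p$ to $\widetilde q$ other than the minimal one has index $\ge n-k$.

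Statement $(\star)$ is the crux and is where I expect the real work to lie. The idea: given a non-minimal geodesic $\gamma\colon[0,L]\to\widetilde M$ from $\widetilde p$ to $\widetilde q$, one exhibits an $(n-k)$--dimensional subspace of the space of piecewise smooth fields along $\gamma$ vanishing at $\widetilde p$ and $\widetilde q$ on which the index form is negative definite, whence $\Index(\gamma)\ge n-k$. The tool is the Jacobi field comparison of \cite{GuijWilh}, which along any geodesic in a manifold with $\Ric_{k}\ge k$ produces a distinguished self-adjoint $(n-k)$--dimensional family of Jacobi fields oscillating at least as fast as the corresponding fields on the round unit sphere; the non-minimality of $\gamma$ --- which, since $\widetilde q$ is not conjugate to $\widetilde p$ along $\gamma$, forces $\gamma$ to run past where a minimizer must stop and, with $\mathrm{conj}_{\widetilde p}>\pi/2$, past the critical distance $\pi/2$ --- is precisely what lets one deploy this family to build the negative subspace. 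This is exactly the step that makes the theorem ``soft'': one controls only the conjugate radius at $p$, not the injectivity radius, so one cannot invoke Toponogov comparison nor argue that metric balls are embedded discs, and must extract the negative subspace directly from the \cite{GuijWilh} comparison along each geodesic --- while simultaneously ruling out short ``spurious'' low-index geodesics, which is what the \emph{strict} inequality $\mathrm{conj}_{p}>\pi/2$ is there to guarantee (the bound is sharp, as $\mathbb{CP}^{n/2}$ has $\Ric_{k}\ge k$ and conjugate radius exactly $\pi/2$ yet is not highly connected).

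Finally suppose $k\le n/2$, so $n-k\ge n/2$. Then $\widetilde M$ is a closed, orientable (being simply connected), $n$--manifold with $\widetilde H_{i}(\widetilde M)=0$ for $1\le i\le n/2$; Poincar\'e duality and the universal coefficient theorem propagate this vanishing to the range $n/2\le i\le n-1$ as well, so $\widetilde M$ is an integral homology sphere, hence by the Hurewicz theorem a homotopy $n$--sphere. The resolved generalized Poincar\'e conjecture (Smale for $n\ge 5$, Freedman for $n=4$, Perelman for $n=3$, and the classical surface case $n\le 2$) then gives that $\widetilde M$ is homeomorphic to $S^{n}$.
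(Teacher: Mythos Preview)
Your overall plan---reduce to the simply connected case, then do Morse theory on the path (or loop) space and show every nontrivial critical point has index $\ge n-k$---is exactly the paper's plan, and your final paragraph deriving the homeomorphism from $(n-k)$--connectivity is correct.  The gap is in your justification of $(\star)$.

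First, the threshold is wrong: the Jacobi field comparison (Lemma~\ref{comp with sing at 0} in the paper) yields $\Index(\gamma)\ge n-k$ only for geodesics of length $>\pi$, not $>\pi/2$.  Second, and more seriously, you have not explained why every non-minimal geodesic from $\widetilde p$ to $\widetilde q$ is longer than $\pi$.  Non-minimality only says $\gamma$ runs past the cut point in its direction, but a cut point need not be a conjugate point---it may be a point where two distinct minimizers meet---so the hypothesis $\mathrm{conj}_{\widetilde p}>\pi/2$ gives \emph{no} lower bound on the cut radius, and hence no lower bound on the length of a non-minimal geodesic.  Choosing $\widetilde q$ outside the cut locus and non-conjugate along every geodesic does not help: it makes the energy functional Morse, but does nothing to exclude a non-minimal geodesic of length, say, $3\pi/4$ with index $0$.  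You acknowledge that ``short spurious low-index geodesics'' must be ruled out, but you never supply the mechanism.

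The paper's mechanism is the Long Homotopy Lemma, and it is the heart of the proof.  The authors work with geodesic \emph{loops} at $p$ (not paths to an auxiliary $q$), fix $b$ with $\pi<b<2\,\mathrm{conj}_p$, and argue in two steps.  Step one: since loops of length $>b>\pi$ have index $\ge n-k\ge 2$, Theorem~\ref{peter morse them} makes the inclusion $E^{-1}[0,b^2/2]\hookrightarrow\Omega_p$ $1$--connected; combined with $\pi_1(M)=0$ and the long exact sequence of the pair, this forces $E^{-1}[0,b^2/2]$ to be path-connected.  Step two: if a geodesic loop of length $\le b$ existed, step one would make it null-homotopic through loops of length $\le b$, but the Long Homotopy Lemma says any null-homotopy must pass through a curve of length $\ge 2\,\mathrm{conj}_p>b$---a contradiction.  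Hence there are no geodesic loops of length $\le b$, every nontrivial critical point has index $\ge n-k$, and $\Omega_p$ is $(n-k-1)$--connected.  This bootstrap via the Long Homotopy Lemma is precisely the missing ingredient in your argument, and it is where the strict inequality $\mathrm{conj}_p>\pi/2$ is actually used.
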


Since manifolds with $1\leq\sect <4$ have $\Ric_{1}\geq1$ and conjugate radii
$>\frac{\pi}{2},$ Theorem \ref{Intermeadiate Ricci Thm} generalizes the
classical sphere theorem.


The assumption $\mathrm{conj}_{p}>\frac{\pi}{2}$ cannot be weakened. Indeed,
$\mathrm{Ric}_{n-2}\left(  \mathbb{C}P^{n}\right)  \geq n-2,$ while
$\mathrm{conj}\left(  \mathbb{C}P^{n}\right)  =\frac{\pi}{2},$ yet
$\mathbb{C}P^{n}$ is not $2$--connected. In Section \ref{Sect: Examples}, we
give examples of various versions of Theorem \ref{Intermeadiate Ricci Thm},
including examples that show the connectivity estimate cannot be improved.

For our second result, let $N$ be a smoothly embedded submanifold of a
Riemannian manifold $M$, and denote by $S_{v}:T_{p}N\to T_{p}N$ the shape
operator of $N$ corresponding to a unit vector $v$ normal to $N$.
For brevity we write $S_{v}|_{W}$ for the composition of $S_{v}$ restricted to
some subspace $W$ of $T_{p}N$ with orthogonal projection $T_{p}%
N\longrightarrow W$. We also denote by $\foc_{N}$ the focal radius of $N$.

\begin{theoremlet}
\label{quant conn with norm II thm} Let $M$ be a simply connected, complete
Riemannian $n$--manifold with $\mathrm{Ric}_{k}M\geq k,$ and let $N\subset M$
be a compact, connected, embedded, $\ell$--dimensional submanifold.

If for some $r\in\left[  0,\frac{\pi}{2}\right)  ,$%
\begin{equation}
\foc_{N}>r, \label{str focalinequal}%
\end{equation}
and for all unit vectors $v$ normal to $N$ and all $k$--dimensional subspaces
$W\subset$ $TN$,%
\begin{equation}
\left\vert \Tr\left(  S_{v}|_{W}\right)  \right\vert \leq k\cot\left(
\frac{\pi}{2}-r\right)  , \label{II hypo}%
\end{equation}
then the inclusion $N\hookrightarrow M$ is $\left(  2\ell-n-k+2\right)  $--connected.
\end{theoremlet}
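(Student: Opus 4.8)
The plan is to deduce the connectivity statement from Morse theory on a path space, and to funnel the three hypotheses — $\Ric_{k}M\ge k$, the focal radius bound \eqref{str focalinequal}, and the second fundamental form bound \eqref{II hypo} — into one index estimate for geodesics running from $N$ back to $N$.

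\emph{Step 1: reduction to an index estimate.} I would work with the path space $\Omega(M;N,N)$ of $H^{1}$ paths $c\colon[0,1]\to M$ with $c(0),c(1)\in N$, endowed with the energy $E$. Since $M$ is simply connected and $N$ is connected, $\Omega(M;N,N)$ is connected; $E$ satisfies the Palais--Smale condition because $N$ is compact and $M$ complete; the minimum of $E$ is the nondegenerate critical submanifold of constant paths, which is $N$; and the remaining critical points are exactly the geodesics meeting $N$ orthogonally at both endpoints. Evaluation at the initial point is a fibration $\Omega(M;N,N)\to N$ with fibre the path space $\Omega(M;p,N)$ from a point of $N$ to $N$, and the constant paths give a section of it; combined with the classical identification $\pi_{j}(M,N)\cong\pi_{j-1}\!\left(\Omega(M;p,N)\right)$ this yields $\pi_{j}(M,N)\cong\pi_{j-1}\!\left(\Omega(M;N,N),N\right)$. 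By Morse--Bott theory $\Omega(M;N,N)$ has the homotopy type of $N$ with cells of dimension at least $\min_{\gamma}\Index(\gamma)$ attached, the minimum running over nonconstant critical geodesics. So — exactly as in Wilking's connectivity principle — it suffices to show
\begin{equation*}
\Index(\gamma)\ \ge\ 2\ell-n-k+2\qquad\text{for every geodesic }\gamma\text{ meeting }N\text{ orthogonally at both ends}
\end{equation*}
(possible degeneracies being absorbed by a standard perturbation, using that the hypotheses are $C^{2}$-open).

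\emph{Step 2: a length bound and the linear algebra.} Fix such a $\gamma\colon[0,L]\to M$, unit speed, with $L>0$. First I claim $L>2r$: if $L\le 2r$, then $\tfrac{L}{2}\gamma'(0)$ and $-\tfrac{L}{2}\gamma'(L)$ are normal vectors of $N$ of length $\tfrac{L}{2}\le r<\foc_{N}$ carried by the normal exponential map to the single point $\gamma(L/2)$; they are distinct, since the alternative $\gamma(0)=\gamma(L)$ with $\gamma'(0)=-\gamma'(L)$ would force $\gamma'\equiv 0$, so this contradicts \eqref{str focalinequal}. Next let $\mathcal V$ be the space of parallel fields $V$ along $\gamma$ with $V(0)\in T_{\gamma(0)}N$ and $V(L)\in T_{\gamma(L)}N$. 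Parallel transport along $\gamma$ is an isometry carrying $\gamma'(0)$ to $\gamma'(L)$, so $V(0)$ ranges over the intersection, inside $\gamma'(0)^{\perp}\cong\R^{n-1}$, of two $\ell$-dimensional subspaces; hence $\dim\mathcal V\ge 2\ell-n+1$, and every $V\in\mathcal V$ is automatically $\perp\gamma'$.

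\emph{Step 3: the index estimate.} Suppose $\Index(\gamma)\le 2\ell-n-k+1$. The index form on $H^{1}$ fields $X\perp\gamma'$ with $X(0)\in T_{\gamma(0)}N$, $X(L)\in T_{\gamma(L)}N$ is
\begin{equation*}
I(X,X)=\int_{0}^{L}\!\left(|X'|^{2}-\langle R(X,\gamma')\gamma',X\rangle\right)dt+\langle S_{\gamma'(L)}X(L),X(L)\rangle-\langle S_{\gamma'(0)}X(0),X(0)\rangle ,
\end{equation*}
so there is a closed subspace $\mathcal P$ of codimension $\Index(\gamma)\le 2\ell-n-k+1$ on which $I\ge0$. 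Because $L>2r$, monotonicity in $L$ of the least eigenvalue of the Sturm--Liouville problem $-f''=\lambda f$, $f'(0)=\cot(\tfrac{\pi}{2}-r)f(0)$, $f'(L)=-\cot(\tfrac{\pi}{2}-r)f(L)$ — whose value equals $1$ when $L=2r$ — produces a function $f\colon[0,L]\to\R$ with
\begin{equation*}
Q(f):=\int_{0}^{L}\!\left(f'^{2}-f^{2}\right)dt+\cot\!\left(\tfrac{\pi}{2}-r\right)\!\left(f(0)^{2}+f(L)^{2}\right)<0 .
\end{equation*}
Since $V\mapsto fV$ embeds $\mathcal V$ into the domain of $I$, the subspace $f\mathcal V$ has dimension $\ge 2\ell-n+1$, so $\dim(f\mathcal V\cap\mathcal P)\ge k$. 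Pick parallel fields $V_{1},\dots,V_{k}$, orthonormal at one (hence every) point of $\gamma$, with $fV_{i}\in\mathcal P$. Then $\{\gamma'(t),V_{1}(t),\dots,V_{k}(t)\}$ is an orthonormal $(k+1)$-frame for each $t$, so $\Ric_{k}M\ge k$ gives $\sum_{i=1}^{k}\sect(V_{i},\gamma')\ge k$ pointwise, while $W_{0}:=\spann\{V_{i}(0)\}$ and $W_{L}:=\spann\{V_{i}(L)\}$ are $k$-dimensional subspaces of $TN$, so \eqref{II hypo} bounds $|\Tr(S_{\gamma'(0)}|_{W_{0}})|$ and $|\Tr(S_{\gamma'(L)}|_{W_{L}})|$ by $k\cot(\tfrac{\pi}{2}-r)$. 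Summing,
\begin{align*}
0\ &\le\ \sum_{i=1}^{k}I(fV_{i},fV_{i})\\
&=\ \int_{0}^{L}\!\Bigl(kf'^{2}-f^{2}\!\sum_{i=1}^{k}\sect(V_{i},\gamma')\Bigr)dt+f(L)^{2}\,\Tr\!\bigl(S_{\gamma'(L)}|_{W_{L}}\bigr)-f(0)^{2}\,\Tr\!\bigl(S_{\gamma'(0)}|_{W_{0}}\bigr)\\
&\le\ k\,Q(f)\ <\ 0 ,
\end{align*}
a contradiction. Hence $\Index(\gamma)\ge 2\ell-n-k+2$, and Step 1 finishes the proof.

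\emph{The hard part.} I expect the crux to be Step 2 together with the matching choice of $f$: the factor $\cot(\tfrac{\pi}{2}-r)=\tan r$ in \eqref{II hypo} is precisely the amount of endpoint concavity that the one‑dimensional form $Q$ can absorb over an interval of length a hair above $2r$, so the focal radius hypothesis \eqref{str focalinequal} must be leveraged exactly to push the length of every such $\gamma$ above $2r$; weakening either bound breaks this balance, which is presumably what the sharpness examples of Section \ref{Sect: Examples} exploit. A secondary technical point is the honest treatment of degenerate critical geodesics in the Morse theory of Step 1.
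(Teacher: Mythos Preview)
Your Step~3 is a clean and legitimately different route to the index estimate: instead of the paper's Hingston--Kalish index formula combined with transverse Jacobi/Riccati comparison (Lemmas~\ref{sing soon Ric_k Lemma}--\ref{comp with sing at 0}, Corollary~\ref{first cor}, Lemma~\ref{lem:KeyLemma}), you run a direct second--variation argument with parallel test fields scaled by a Robin eigenfunction. That is closer in spirit to Wilking's original proof and avoids the endmanifold Morse index theorem entirely; it is a genuine simplification of the paper's Lemma~\ref{lem:KeyLemma} for the regime $L>2r$.

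The gap is in Step~2. The inequality $\foc_{N}>r$ says only that $\exp_{N}^{\perp}$ has no \emph{critical points} on the $r$--disk bundle, i.e.\ it is a local diffeomorphism there; it does \emph{not} say $\exp_{N}^{\perp}$ is injective. Two distinct normal vectors of length $\le r$ hitting the same point $\gamma(L/2)$ is a failure of global injectivity, not the existence of a focal point, so it does not contradict \eqref{str focalinequal}. You are tacitly using a normal--injectivity--radius bound, which is a strictly stronger hypothesis (the paper makes exactly this distinction in the paragraph following Theorem~\ref{quant conn with norm II thm}). And you cannot simply extend Step~3 to cover $L\le 2r$: your Robin eigenvalue equals $1$ at $L=2r$ and exceeds $1$ for $L<2r$, so no $f$ with $Q(f)<0$ exists there.

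This is precisely why the paper needs Lemma~\ref{Lem: long homot} (the Long Homotopy Lemma for submanifolds). The paper's logic is: first prove the index bound for geodesics of length $>2r$; feed that into Morse theory to get that the sublevel $E^{-1}[0,2\tilde r^{\,2}]$ is path--connected for $\tilde r$ slightly above $r$; then, were there a geodesic $\gamma$ of length $\le 2r$, it would be homotopic in $\Omega_{N}$ to a path in $N$ through curves of length $\le 2\tilde r<2\foc_{N}$, contradicting Lemma~\ref{Lem: long homot}. Your argument is missing this ingredient (or an equivalent one), and without it the reduction in Step~1 is incomplete.
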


Recall that an inclusion $\iota:N\hookrightarrow M$ is called $q$--connected
if the relative homotopy groups $\pi_{k}(M,N)=0$ for all $k\leq q$.
Equivalently, $\iota$ induces an isomorphism on homotopy groups up to
dimension $\left(  q-1\right)  $ and a surjection of $q^{th}$--homotopy groups.

In the event that $\pi_{1}\left(  M\right)  \neq0$, Theorem
\ref{quant conn with norm II thm} implies $\pi_{k}(M,N)=0$ for all integers
$k$ in the range $2\leq k\leq2\ell-n-k+2$. To see this, apply Theorem
\ref{quant conn with norm II thm} the the universal cover of $M$ and use the
fact that covering maps induce isomorphisms on higher relative homotopy groups
(Exercise 6, p. 358, \cite{Hatch}).

The Clifford torus in $\mathbb{S}^{3}$ shows that Theorem
\ref{quant conn with norm II thm} is optimal in the sense that the conclusion
is false if (\ref{II hypo}) is replaced with a weak inequality (see Example
\ref{Ex: cliff torus}).

The proofs of both theorems also work in the non-simply connected case if the
conjugate and focal radius hypotheses are replaced with the corresponding
hypotheses on injectivity and normal injectivity radius. If, in addition, we
assume that $r=0$ and $k=1,$ then Theorem \ref{quant conn with norm II thm}
recovers Wilking's connectivity principle (Theorem 1, \cite{Wilk3}). Theorem
\ref{quant conn with norm II thm} also generalizes the various versions of the
connectivity principle in \cite{FangRong}. In the case $r=0$ and $k=n-1,$ the
proof of Theorem \ref{quant conn with norm II thm} recovers Frankel's theorem
that the inclusion of a minimal hypersurface in a manifold with positive Ricci
curvature induces a surjection of fundamental groups (\cite{Frank2}).
Similarly, the proof of Theorem \ref{Intermeadiate Ricci Thm} recovers
Petersen's result that a manifold with $\mathrm{Ric}\geq n-1$ is simply
connected if it has a point whose injectivity radius is $>\frac{\pi}{2}$
(\cite{Pet}, page 181).

We also include a generalization of Frankel's Theorem on intersections of
totally geodesic submanifolds in positive sectional curvature. We allow at the
same time for positive $\Ric_{k}$ curvature and non-totally geodesic
submanifolds. The result is quantitatively optimal and appears as Theorem
\ref{quant Frankel}.

The proofs of Theorems \ref{Intermeadiate Ricci Thm} and
\ref{quant conn with norm II thm} use a version of the Long Homotopy Lemma
that applies to submanifolds. We have not been able to find the required
formulation in the literature, so we include it here (see, e.g., page 235 in
\cite{doCarm} for the Long Homotopy Lemma for points. Also see \cite{AbrMey}
or \cite{CheegGrm}).

\begin{lemmalet}
\label{Lem: long homot}(Long Homotopy Lemma for Submanifolds) Let $M$ be a
Riemannian manifold and $N$ a closed, embedded submanifold. Let $\gamma
:\left[  0,b\right]  \longrightarrow M$ be a unit speed geodesic with
$b<2\foc_{N}$ and $\gamma\left(  0\right)  ,\gamma\left(  b\right)  \in N$
with $\gamma^{\prime}\left(  0\right)  ,\gamma^{\prime}\left(  b\right)  \perp
N$. Suppose that $H$ is a homotopy so that
\begin{align*}
H_{1}\left(  t\right)   &  =\gamma\left(  t\right)  ,\\
H_{s}(0),H_{s}(b)  &  \in N\quad\text{ for all }s\in\left[  0,1\right]
,\text{ and}\\
H_{0}\left(  t\right)   &  \in N\quad\text{ for all }t\in\left[  0,b\right]  .
\end{align*}
Then for some $t$, the length of the curve $H_{t}\geq2\foc_{N}$.
\end{lemmalet}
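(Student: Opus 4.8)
The plan is to argue by contradiction, using the normal exponential map and the absence of critical points inside the focal radius. Write $r=\foc_N$ and let $E\colon\nu N\to M$, $E(\xi)=\exp_{\pi(\xi)}(\xi)$, be the normal exponential map. By the definition of the focal radius, $E$ has no critical point on the open disk bundle $\mathcal{D}_r=\{\xi\in\nu N:\ |\xi|<r\}$, so $E|_{\mathcal{D}_r}$ is an immersion and, by equality of dimensions, a local diffeomorphism. Then $E^{*}g$ is a genuine Riemannian metric on $\mathcal{D}_r$ for which $E$ is a local isometry, and by the Gauss lemma for the normal exponential map the radial segments minimize $E^{*}g$--distance to the zero section $Z$; hence $d_{E^{*}g}(\xi,Z)=|\xi|$ for every $\xi\in\mathcal{D}_r$.

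Suppose, for a contradiction, that every curve $H_{s}:=H(s,\cdot)$ has length $<2r$. Pick, continuously in $s$, a parameter $\tau(s)\in[0,b]$ splitting $H_{s}$ into two arcs each of length $<r$; this is possible since $\length(H_{s})<2r$ holds uniformly in $s$ (after a routine regularization of $H$, e.g. replacing each $H_{s}$ by a broken geodesic, such a choice is manifestly continuous). Since $E|_{\mathcal{D}_r}$ is a local diffeomorphism, $H_{s}|_{[0,\tau(s)]}$ has a unique lift $\alpha_{s}$ that stays in $\mathcal{D}_r$ and starts at the zero vector over $H_{s}(0)$; along it $|\alpha_{s}(u)|=d_{E^{*}g}(\alpha_{s}(u),Z)\le\length(H_{s}|_{[0,u]})<r$, so $\alpha_{s}$ stays in a compact part of $\mathcal{D}_r$ (here completeness of $M$ and closedness of $N$, via Hopf--Rinow, keep that part compact) and is therefore defined on all of $[0,\tau(s)]$. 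Symmetrically, lifting $H_{s}|_{[\tau(s),b]}$ backwards from the zero vector over $H_{s}(b)$ gives a lift $\beta_{s}$ inside $\mathcal{D}_r$. Uniqueness of lifts through the local homeomorphism $E$ makes $(s,u)\mapsto\alpha_{s}(u)$ and $(s,u)\mapsto\beta_{s}(u)$ continuous, so $\delta(s):=(\alpha_{s}(\tau(s)),\beta_{s}(\tau(s)))$ is a continuous path in the fiber product $\mathcal{D}_r\times_{M}\mathcal{D}_r=\{(\xi,\eta):E(\xi)=E(\eta)\}$.

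Now the diagonal $\Delta=\{(\xi,\xi)\}$ is open and closed in $\mathcal{D}_r\times_{M}\mathcal{D}_r$: closed because $\nu N$ is Hausdorff, open because $E$ is locally injective on $\mathcal{D}_r$. At $s=0$ the curve $H_{0}$ lies in $N$, so $\alpha_{0}$ and $\beta_{0}$ are both the zero section above $H_{0}$ (this is a lift, hence the lift), and $\delta(0)\in\Delta$; therefore $\delta(s)\in\Delta$ for all $s$. At $s=1$ we have $H_{1}=\gamma$, a unit--speed geodesic perpendicular to $N$ at both ends, so $\tau(1)=b/2$ and the two lifts are the radial segments $\alpha_{1}(t)=t\,\gamma'(0)\in\nu_{\gamma(0)}N$ and $\beta_{1}(t)=(b-t)(-\gamma'(b))\in\nu_{\gamma(b)}N$, both of norm $\le b/2<r$. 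Thus $\delta(1)\in\Delta$ forces $\tfrac b2\,\gamma'(0)=\tfrac b2\,(-\gamma'(b))$ in $\nu N$, i.e. $\gamma(0)=\gamma(b)$ and $\gamma'(0)=-\gamma'(b)$. By uniqueness of geodesics this makes $t\mapsto\gamma(b-t)$ coincide with $\gamma$, so $\gamma'(b/2)=-\gamma'(b/2)=0$, contradicting unit speed. Hence some $H_{t}$ has length $\ge 2r=2\foc_N$.

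The step I expect to require the most care is the norm estimate that keeps each lift inside $\mathcal{D}_r$ — that is, the Gauss lemma for the normal exponential map and the resulting identity $d_{E^{*}g}(\cdot,Z)=|\cdot|$ — together with the verification that the maximal lift of each half--curve is defined on its whole interval, where a completeness/properness input (automatic in the applications) is used. Once those are in place the conclusion via the clopen diagonal of $\mathcal{D}_r\times_{M}\mathcal{D}_r$ is formal; note that when $N=\{p\}$ and $r=\mathrm{conj}_{p}$ the argument specializes to the classical Long Homotopy Lemma for points.
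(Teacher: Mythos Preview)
Your argument is correct and follows essentially the same architecture as the paper's appendix: pull back the metric to the focal disk bundle via the normal exponential map, split each $H_s$ at its length--midpoint, lift the two halves from the zero section at the respective endpoints, and run a clopen argument to force the half--lifts to agree. Your packaging of the clopen step via the diagonal of the fiber product $\mathcal D_r\times_M\mathcal D_r$ is exactly the paper's ``$A$ is clopen in $[0,1]$'' argument rephrased. The one genuine variation is the endgame: the paper proves a separate ``no lift'' lemma for $\gamma$ using a distance estimate ($|\tilde\gamma(c)|=c>b/2$ while $\dist(\tilde\gamma(c),\tilde\gamma(b))<b/2$), whereas you read off from the midpoint equality that $\gamma(0)=\gamma(b)$ and $\gamma'(0)=-\gamma'(b)$, forcing $\gamma'(b/2)=0$. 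Both work; yours avoids the extra lemma at the cost of a small uniqueness--of--geodesics computation.
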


The paper is structured as follows: We start with a section reviewing the
comparison theorem for Jacobi fields that we need in the rest of the paper and
use it to derive Corollary \ref{first cor} about the index of Lagrangians in
$k$-th Ricci curvature. Section \ref{sect:index} recalls the definition of the
index of a geodesic both for the endpoint and the endmanifold cases. The main
result in this section is Theorem \ref{hingst-kalish thmm}, which is due to N.
Hingston and D. Kalish. The proof of Theorem \ref{Intermeadiate Ricci Thm}
appears in Section \ref{sect:proof_Thm_A}, and the proof of the quantitative
connectivity principle (Theorem \ref{quant conn with norm II thm}) is in
Section \ref{sect:proof_of_thm_B}. Section \ref{sect:Frankel} contains the
proof of our extension of Frankel's Theorem, and Section \ref{Sect: Examples}
gives some examples showing the sharpness of our theorems. The proof of the
Long Homotopy Lemma for submanifolds has been postponed to an appendix.

\begin{remarknonum}
An advantage of Theorem \ref{quant conn with norm II thm} and Theorem
\ref{quant Frankel} is that they apply to an open set of metrics. The main
theorem of \cite{MurWilh} shows that the set of metrics to which either the
original connectivity principle or Frankel's theorem can be applied is residual.
\end{remarknonum}



\begin{remarknonum}
Our techniques yield versions of Theorems \ref{Intermeadiate Ricci Thm},
\ref{quant conn with norm II thm}, and \ref{quant Frankel} for radial
intermediate Ricci curvature. We leave the details to the reader.
\end{remarknonum}

\noindent\textbf{Acknowledgment: } We thank Burkhard Wilking for reminding us
about the long homotopy lemma after seeing a preliminary version of some of
the results in this paper. We are grateful to Paula Bergen for copyediting the
manuscript.

\medskip

\section{Transverse Jacobi Field Comparison\label{Jacobi-comp sect}}

In this section, we review Wilking's transverse Jacobi equation and some
comparison results developed for it in \cite{GuijWilh}. To simplify the
writing, any vectors or vector fields along a geodesic will always be normal
to it, and we assume throughout that $\dim M=n$.

\subsection{Lagrangian subspaces of Jacobi fields}

Let $\gamma:I\rightarrow M$ be a unit speed geodesic, and denote by
$\mathcal{J}$ the vector space of normal Jacobi fields along it. $\mathcal{J}$
is a vector space of dimension $2n-2$. Using symmetries of the curvature
tensor, we see that for any pair $J_{1},J_{2}\in\mathcal{J},$
\[
\omega\left(  J_{1},J_{2}\right)  =\left\langle J_{1}^{\prime},J_{2}%
\right\rangle -\left\langle J_{1},J_{2}^{\prime}\right\rangle ,
\]
is constant along $\gamma$ and hence defines a symplectic form $\omega$ on
$\mathcal{J}$.

\begin{definition}
We will say that an $(n-1)$--dimensional subspace $\Lambda$ of $\mathcal{J} $
is Lagrangian when $\omega$ vanishes on $\Lambda$.
\end{definition}



\begin{definition}
Let $\mathcal{V}$ be a subspace of $\Lambda$. We will say that $\mathcal{V}$
has full index at $\bar{t}$ if any $J\in\Lambda$ with $J(\bar{t})=0\/$ belongs
to $\mathcal{V}$. We will also say that $\mathcal{V}$ has full index on an
interval $I$ if it has full index at each point of $I$.
\end{definition}

For a fixed interval $I,$ we denote by $\mathcal{K}$ the minimal subspace of
$\Lambda$ that has full index on $I$. Thus
\begin{equation}
\label{eq:index_subspace}\mathcal{K}\equiv\spann\left\{  \left.  J\in
\Lambda\text{ }\right\vert \text{ }J\left(  t\right)  =0\text{ for some }t\in
I\right\}  .
\end{equation}

\subsection{The Riccati operator related to a Lagrangian}

The set of times $t$ where
\begin{equation}
\left\{  J(t)\ |\ J\in\Lambda\right\}  =\dot{\gamma}\left(  t\right)  ^{\perp}
\label{Eval 1-1 eqn}%
\end{equation}
is open and dense in $I$. Observe that these are the times where no nontrivial
field in $\Lambda$ vanishes. For these $t$ we get a well-defined Riccati
operator
\begin{equation}
S_{t}:\dot{\gamma}\left(  t\right)  ^{\perp}\longrightarrow\dot{\gamma}\left(
t\right)  ^{\perp},\qquad S_{t}(v)=J_{v}^{\prime}\left(  t\right)  ,
\label{Rica dfn}%
\end{equation}
where $J_{v}$ is the unique $J_{v}\in\Lambda$ so that $J_{v}\left(  t\right)
=v.$ The Jacobi equation then decomposes into the two first order equations
\[
S_{t}\left(  J\right)  =J^{\prime},\qquad S_{t}^{\prime}+S_{t}^{2}+R=0,
\]
where $S_{t}^{\prime}$ is the covariant derivative of $S_{t}$ along $\gamma$,
and $R$ is the curvature along $\gamma$, that is $R\left(  \cdot\right)
=R\left(  \cdot,\dot{\gamma}\right)  \dot{\gamma}$ (see Equation 1.7.1 in
\cite{gw}). Although $S_{t}$ depends on the choice of $\Lambda$, we will never
discuss more than one family $\Lambda$, so we omit this dependence from our notation.

Given $t\in I$, and a subspace $\mathcal{V}\subset\Lambda$, we get a subspace
of $\gamma^{\prime\perp}$ by setting
\begin{equation}
\mathcal{V}(t)=\left\{  \left.  J\left(  t\right)  \text{ }\right\vert \text{
}J\in\mathcal{V}\right\}  \oplus\left\{  \left.  J^{\prime}\left(  t\right)
\text{ }\right\vert \text{ }J\in\mathcal{V}\text{ and }J\left(  t\right)
=0\right\}  . \label{dfn of eval eqn}%
\end{equation}
The second summand vanishes for almost every $t$ (cf Lemma 1.7 of \cite{gw}),
even at such times $S_{t}$ is well defined on a subspace of $\gamma
^{\prime\perp}.$ Indeed,

\begin{lemma}
\label{S well dfned}(\cite[Remark 3.3]{GuijWilh}) Let $\mathcal{K}$ the
subspace of $\Lambda$ defined in \eqref{eq:index_subspace}. Then equation
\eqref{Rica dfn} defines $S_{t}$ on $\mathcal{K}\left(  t\right)  ^{\perp}$
for every $t\in I$.
\end{lemma}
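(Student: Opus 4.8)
The plan is to isolate the two obstructions to the formula $S_t(v)=J_v'(t)$ at an exceptional time $t$ — existence of a field $J_v\in\Lambda$ with $J_v(t)=v$, and uniqueness of the value $J_v'(t)$ — and to show that the first holds precisely when $v\in\mathcal{K}(t)^{\perp}$, while the second holds after reduction modulo $\mathcal{K}(t)$. The only structural input is the Lagrangian condition $\omega|_{\Lambda}\equiv 0$, which evaluated at $t$ reads $\langle J_1'(t),J_2(t)\rangle=\langle J_1(t),J_2'(t)\rangle$ for all $J_1,J_2\in\Lambda$.

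First I would study the evaluation map $J\mapsto J(t)$ on $\Lambda$. Its kernel $Z_t:=\{J\in\Lambda\mid J(t)=0\}$ is contained in $\mathcal{K}$ by \eqref{eq:index_subspace}, and since a nonzero Jacobi field vanishing at $t$ has nonzero derivative there, $J\mapsto J'(t)$ is injective on $Z_t$; its image $W_t:=\{J'(t)\mid J\in Z_t\}$ is exactly the second summand of $\mathcal{K}(t)$ in \eqref{dfn of eval eqn}, hence $W_t\subseteq\mathcal{K}(t)$. Feeding $J_1\in Z_t$ into the Lagrangian relation gives $\langle J_1'(t),J_2(t)\rangle=0$ for all $J_2\in\Lambda$, so $W_t$ is orthogonal to the image $\{J(t)\mid J\in\Lambda\}$. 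A dimension count — $\dim\{J(t)\mid J\in\Lambda\}=(n-1)-\dim Z_t$, so its orthogonal complement in $\dot\gamma(t)^{\perp}$ has dimension $\dim Z_t=\dim W_t$ — upgrades this to the equality $\{J(t)\mid J\in\Lambda\}^{\perp}=W_t$. Taking orthogonal complements and using $W_t\subseteq\mathcal{K}(t)$ then yields $\mathcal{K}(t)^{\perp}\subseteq\{J(t)\mid J\in\Lambda\}$, so every $v\in\mathcal{K}(t)^{\perp}$ is realized as $J_v(t)$ for some $J_v\in\Lambda$.

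It remains to control the ambiguity: if $J_v,\widetilde{J}_v\in\Lambda$ both equal $v$ at $t$, then $J_v-\widetilde{J}_v\in Z_t$, so $(J_v-\widetilde{J}_v)'(t)\in W_t\subseteq\mathcal{K}(t)$; thus $J_v'(t)$ is well-defined modulo $\mathcal{K}(t)$. Composing with the orthogonal projection $\dot\gamma(t)^{\perp}\to\mathcal{K}(t)^{\perp}$ then produces a genuine linear operator $S_t\colon\mathcal{K}(t)^{\perp}\to\mathcal{K}(t)^{\perp}$, agreeing with \eqref{Rica dfn} at the open dense set of times where $\mathcal{K}(t)=\{0\}$. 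I expect the only step needing care to be the equality $\{J(t)\mid J\in\Lambda\}^{\perp}=W_t$ (as opposed to the easy inclusion $W_t\subseteq\{J(t)\mid J\in\Lambda\}^{\perp}$), since it is precisely this that pins the domain down to $\mathcal{K}(t)^{\perp}$; it rests on the injectivity of $J\mapsto J'(t)$ on $Z_t$ together with $\dim\Lambda=n-1$, and everything else is bookkeeping with $\omega$.
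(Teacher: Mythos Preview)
The paper does not prove this lemma; it is quoted from \cite[Remark~3.3]{GuijWilh} and followed only by a one-sentence gloss. Your argument is the standard one and is correct: the Lagrangian identity $\langle J_1'(t),J_2(t)\rangle=\langle J_1(t),J_2'(t)\rangle$ forces $W_t\perp\{J(t):J\in\Lambda\}$, the dimension count (using injectivity of $J\mapsto J'(t)$ on $Z_t$ and $\dim\Lambda=n-1$) upgrades this to $W_t=\{J(t):J\in\Lambda\}^{\perp}$, and the inclusion $W_t\subseteq\mathcal{K}(t)$ then delivers both the existence of $J_v$ for $v\in\mathcal{K}(t)^{\perp}$ and the well-definedness of $J_v'(t)$ modulo $\mathcal{K}(t)$.

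Two small remarks. First, your closing phrase ``the open dense set of times where $\mathcal{K}(t)=\{0\}$'' is not what you mean: the same Lagrangian argument shows the two summands in \eqref{dfn of eval eqn} are orthogonal, so $\dim\mathcal{K}(t)=\dim\mathcal{K}$ for every $t$, and $\mathcal{K}(t)$ vanishes only when $\mathcal{K}$ itself does. You intend the set of regular times, i.e.\ where $Z_t=\{0\}$. Second, your conclusion is in fact sharper than the paper's informal remark after the lemma, which says that $J_v'(t)$ itself is independent of the choice of $J_v$; as your own argument shows, the ambiguity sits in $W_t$, which is nonzero at any singular time, so only the component in $\mathcal{K}(t)^{\perp}$ is unambiguous. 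This discrepancy is harmless for the paper, since every subsequent use of $S_t$ is through $\Tr S_t|_W$ or $\langle S_t v,v\rangle$ with $W$ and $v$ contained in $\mathcal{K}(t)^{\perp}$.
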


When $\Lambda$ has fields vanishing at $t\in I$ and $v\in\mathcal{K}%
(t)^{\perp}$, there are many $J_{v}$ in $\Lambda$ with $J_{v}(t)=v$; Lemma
\ref{S well dfned} says that $J_{v}^{\prime}\left(  t\right)  $ is independent
of these choices.

\begin{example}
\label{ex:submanifold_Lagrangian} An especially useful Lagrangian appears when
we take a geodesic $\gamma:\mathbb{R}\rightarrow M$ normal to a submanifold
$N$ at time zero and consider those Jacobi fields obtained from geodesic
variations that leave $N$ orthogonally. More precisely, let $p=\gamma(0)$, and
denote by $\mathrm{S}_{\gamma^{\prime}\left(  0\right)  }:T_{p}N\rightarrow
T_{p}N$ the shape operator of $N$ determined by $\gamma^{\prime}\left(
0\right)  $. The Lagrangian $\Lambda_{N}$ is the set of Jacobi fields along
$\gamma$ such that
\begin{equation}
J(0)\in T_{p}N\hspace{0.14in}\text{and}\hspace{0.14in}J^{\prime T}%
=S_{\gamma^{\prime}(0)}J(0), \label{dfn of Lambda_N}%
\end{equation}
where $J^{\prime T}$ is the component of $J^{\prime}(0)$ tangent to $S$
(details can be found at \cite[page 227]{doCarm}).

For this Lagrangian, the Riccati operator $S_{0}$ agrees with $S_{\gamma
^{\prime}(0)}$. This follows immediately from equation \eqref{dfn
of Lambda_N} above.
\end{example}

\subsection{Comparison Lemma for the Transverse Jacobi Equation}

\leavevmode

For a subspace $W_{t}\subset\gamma^{\prime}\left(  t\right)  ^{\perp}$ we will
use
\[
P_{W,t}:\gamma^{\prime\perp}\longrightarrow W_{t}%
\]
to denote the orthogonal projection onto $W_{t}$. For simplicity of notation
we will write
\[
\Trace S_{t}|_{W}\text{ for }\mathrm{Trace}\left(  P_{W,t}\circ S_{t}%
|_{W}\right)  .
\]

In the case of positive curvature, Lemma 4.23 in \cite{GuijWilh} gives us the following.

\begin{lemma}
[Intermediate Ricci Comparison]\label{sing soon Ric_k Lemma} Let
$\gamma:\left[  0,t_{\mathrm{max}}\right]  \longrightarrow M$ be a unit speed
geodesic in a complete Riemannian $n$--manifold $M$ with $\mathrm{Ric}%
_{k}\left(  \dot{\gamma},\cdot\right)  $ $\geq k.$ Let $\Lambda$ be a
Lagrangian subspace of normal Jacobi fields along $\gamma$ with Riccati
operator $S$, and let $W_{0}\perp\gamma^{\prime}(0)$ be a $k$--dimensional
subspace so that
\begin{equation}
\Trace S_{0}|_{W_{0}}\leq k\cdot\cot\left(  s_{0}\right)  \label{tr hyp}%
\end{equation}
for some $s_{0}\in\left(  0,\pi\right)  .$ Denote by $\mathcal{V}$ the
subspace of $\Lambda$ formed by those Jacobi fields that are orthogonal to
$W_{0}$ at $0,$ and by $H(t)$ the subspace of $\gamma^{\prime}\left(
t\right)  ^{\perp}$ that is orthogonal to $\mathcal{V}(t)$ at each
$t\in\left[  0,t_{\mathrm{max}}\right]  $
. Assume that $\mathcal{V}$ is of full index in the interval $\left[
0,t_{\mathrm{max}}\right]  $.

Then for all $t\in\left[  0,t_{\mathrm{max}}\right]  $,
%
\begin{equation}
\Tr S_{t}|_{H(t)}\leq k\cdot\cot\left(  t+s_{0}\right)  .
\label{small future Inequal}%
\end{equation}

\end{lemma}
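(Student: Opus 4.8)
The plan is to reduce the statement to a one-dimensional Riccati comparison along the "transverse" part of the Jacobi equation, following the strategy of [GuijWilh, Lemma 4.23]. The key point is that $H(t)$ is, up to the action of the Riccati flow, a $k$-dimensional family that evolves by Wilking's transverse Jacobi equation, and on this family the $\Ric_k$ hypothesis forces the trace of the transverse Riccati operator to obey the same scalar differential inequality as the model comparison function $k\cot(t+s_0)$.

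\medskip

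\noindent\textbf{Step 1: Set up the transverse flow.}
First I would fix the decomposition $\gamma'^{\perp} = \mathcal{V}(t)\oplus H(t)$ for $t$ in the open dense set where $\mathcal{V}$ has no vanishing fields, and recall from Wilking's construction (and Lemma \ref{S well dfned}) that $S_t$ is well defined on $H(t)=\mathcal{K}(t)^\perp$ wherever we need it, since $\mathcal{V}$ has full index on $[0,t_{\max}]$ and hence $\mathcal{K}\subset\mathcal{V}$. Then I would write down the transverse Jacobi/Riccati equation for the operator $\bar S_t:=P_{H,t}\circ S_t|_{H(t)}$ induced on the quotient bundle $\gamma'^\perp/\mathcal{V}(t)\cong H(t)$. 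The crucial structural fact, proved in \cite{GuijWilh} following \cite{Wilk3}, is that $\bar S_t$ satisfies a Riccati equation
\[
\bar S_t' + \bar S_t^{\,2} + \bar R_t = 0,
\]
where the transverse curvature operator $\bar R_t$ is a \emph{compression} of the ambient $R(\cdot,\dot\gamma)\dot\gamma$ to $H(t)$ (it picks up a nonnegative correction term of the form $A^*A$ coming from the shape of $\mathcal{V}(t)$), so in particular $\Tr \bar R_t \ge \Tr\big(R(\cdot,\dot\gamma)\dot\gamma\big)\big|_{H(t)}$.

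\medskip

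\noindent\textbf{Step 2: Apply the $\Ric_k$ hypothesis.}
Because $\dim H(t)=k$ (this dimension count is where the hypothesis "$\mathcal{V}$ has full index" is used: no extra fields of $\Lambda$ collapse into $\mathcal{V}$, so $\dim\mathcal{V}=n-1-k$ and $\dim H(t)=k$), choosing an orthonormal basis of $H(t)$ and combining it with $\dot\gamma$ gives an orthonormal $(k+1)$-frame, so $\Tr\big(R(\cdot,\dot\gamma)\dot\gamma\big)\big|_{H(t)} = \sum_{i=1}^k \sect(\dot\gamma,e_i) \ge k$ by $\Ric_k(\dot\gamma,\cdot)\ge k$. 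Hence $\Tr\bar R_t\ge k$.

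\medskip

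\noindent\textbf{Step 3: The scalar comparison.}
Let $\sigma(t):=\Tr\bar S_t = \Tr S_t|_{H(t)}$. Taking the trace of the Riccati equation and using Cauchy--Schwarz, $\Tr(\bar S_t^{\,2})\ge \tfrac1k(\Tr\bar S_t)^2=\tfrac1k\sigma^2$, together with $\Tr\bar R_t\ge k$, yields the differential inequality
\[
\sigma' \le -\tfrac1k\,\sigma^2 - k .
\]
The function $\varphi(t):=k\cot(t+s_0)$ satisfies $\varphi' = -\tfrac1k\varphi^2 - k$ exactly, and by hypothesis (\ref{tr hyp}) we have $\sigma(0)\le k\cot(s_0)=\varphi(0)$. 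A standard Riccati comparison argument (the difference $\sigma-\varphi$ satisfies $(\sigma-\varphi)'\le -\tfrac1k(\sigma+\varphi)(\sigma-\varphi)$, and one checks that $\varphi$ stays finite on the relevant interval so no blow-up interferes before $t_{\max}$) gives $\sigma(t)\le\varphi(t)=k\cot(t+s_0)$ for all $t\in[0,t_{\max}]$, which is exactly (\ref{small future Inequal}). Finally I would remark that the inequality, first established on the open dense set of "good" times, extends to all of $[0,t_{\max}]$ by continuity of $t\mapsto \Tr S_t|_{H(t)}$, which is part of the content of Lemma \ref{S well dfned}.

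\medskip

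\noindent\textbf{Main obstacle.}
The genuinely delicate part is not the scalar ODE comparison in Step 3 but the setup in Step 1: verifying that the transverse Riccati operator $\bar S_t$ really is well defined and smooth on $H(t)$ across the (measure-zero) set of times where fields of $\Lambda$ vanish, and that the transverse curvature term only \emph{increases} the relevant trace. This is precisely where one must invoke the machinery of \cite{GuijWilh} (Lemma \ref{S well dfned} and the transverse Jacobi equation) rather than reprove it; the honest proof will mostly consist of quoting Lemma 4.23 of \cite{GuijWilh} and checking that its hypotheses are met here, with the dimension bookkeeping $\dim H(t)=k$ being the one nontrivial point to confirm.
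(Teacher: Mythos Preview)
The paper does not actually prove this lemma: it is simply quoted from \cite[Lemma~4.23]{GuijWilh}, with the sentence ``In the case of positive curvature, Lemma~4.23 in \cite{GuijWilh} gives us the following'' serving as the entire justification. Your sketch --- set up Wilking's transverse Riccati equation on $H(t)=\mathcal{V}(t)^\perp$, observe that the transverse curvature term dominates the restriction of $R(\cdot,\dot\gamma)\dot\gamma$ by a nonnegative $A^*A$ correction, invoke $\mathrm{Ric}_k\ge k$ on the $k$-frame in $H(t)$, and finish with a scalar Riccati comparison against $k\cot(t+s_0)$ --- is exactly the argument behind that cited result, and you correctly identify that the honest proof here is a citation together with the dimension bookkeeping. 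Two small points: the transverse Jacobi equation is from \cite{Wilk1}, not \cite{Wilk3}; and your justification that $\dim H(t)=k$ is slightly off --- ``full index'' only says $\mathcal{K}\subset\mathcal{V}$, not that $\dim\mathcal{V}=n-1-k$; the correct reason is that the hypothesis $\Tr S_0|_{W_0}\le k\cot(s_0)$ presupposes $S_0$ is defined on $W_0$, forcing the evaluation map at $0$ to be injective on the preimage of $W_0$, so $\mathcal{V}=\{J\in\Lambda:J(0)\in W_0^\perp\}$ has codimension exactly $k$ in $\Lambda$.
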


By Remark 4.20 of \cite{GuijWilh} we also have

\begin{lemma}
\label{comp with sing at 0} Let $\gamma:\left[  0,t_{\mathrm{max}}\right]
\longrightarrow M$ be a unit speed geodesic in a complete Riemannian
$n$--manifold $M$ with $\mathrm{Ric}_{k}\left(  \dot{\gamma},\cdot\right)  $
$\geq k.$ Let $\Lambda$ be a Lagrangian subspace of normal Jacobi fields along
$\gamma$ with Riccati operator $S$. Let $\mathcal{V}\subset\Lambda$ be a
subspace of full index, and for $t\in\left[  0,t_{\mathrm{max}}\right]  ,$ let
$H(t)$ be a $k$--dimensional subspace of $\gamma^{\prime}\left(  t\right)
^{\perp}$ that is orthogonal to $\mathcal{V}(t).$

Then
\[
\Tr S_{t}|_{H(t)}\leq k\cdot\cot\left(  t\right)  .
\]
In particular, if
\begin{equation}
\dim\mathcal{V}\leq n-1-k,\text{ then }t_{\mathrm{max}}<\pi. \label{small
index implies short}%
\end{equation}

\end{lemma}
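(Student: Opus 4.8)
The plan is to obtain both assertions as the $s_0\searrow0$ degeneration of Lemma \ref{sing soon Ric_k Lemma}: at $s_0=0$ the hypothesis \eqref{tr hyp} on $\Tr S_0|_{W_0}$ becomes vacuous (since $k\cot s_0\to+\infty$), and the comparison function $k\cot(t+s_0)$ collapses to $k\cot t$. To make this precise I would re-run the underlying Riccati comparison directly. Since $\mathcal V$ has full index on $[0,t_{\mathrm{max}}]$, the transverse Jacobi construction recalled in Section~\ref{Jacobi-comp sect} (see \cite{GuijWilh}) produces on the bundle $t\mapsto\mathcal V(t)^{\perp}\subset\dot\gamma(t)^{\perp}$ a self-adjoint transverse Riccati operator $\widehat S_t$ --- agreeing, on $\mathcal V(t)^{\perp}$, with the orthogonal projection onto $\mathcal V(t)^{\perp}$ of the operator $S_t$ of \eqref{Rica dfn}, which is defined there because $\mathcal V(t)^{\perp}\subset\mathcal K(t)^{\perp}$ (Lemma \ref{S well dfned}) --- satisfying $\widehat S_t'+\widehat S_t^{\,2}+\widehat R_t=0$ with $\langle\widehat R_t v,v\rangle\ge\langle R v,v\rangle$ for $v\in\mathcal V(t)^{\perp}$, and --- crucially --- remaining finite (indeed smooth) on the entire closed interval $[0,t_{\mathrm{max}}]$; in particular $\widehat S_0$ is finite, the fields of $\mathcal V$ that vanish at $0$ having been absorbed into $\mathcal V(0)$.

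Next I would track $\psi(t)$, the sum of the $k$ largest eigenvalues of $\widehat S_t$. Since $\dim\mathcal V(t)=\dim\mathcal V$ for every $t$ (immediate from \eqref{dfn of eval eqn}), $\mathcal V(t)^{\perp}$ has constant dimension, and $\psi$ is finite and locally Lipschitz on $[0,t_{\mathrm{max}}]$. Writing $\psi(t)=\max\{\Tr(P_{H}\widehat S_t|_{H})\ :\ H\subset\mathcal V(t)^{\perp},\ \dim H=k\}$ as a pointwise maximum of smooth functions and using $\widehat S_t'=-\widehat S_t^{\,2}-\widehat R_t$, one obtains at every $t$, in the sense of the upper Dini derivative, $\psi'\le-\sum_{i=1}^k\lambda_i^{2}-\sum_{i=1}^k\langle\widehat R_t\xi_i,\xi_i\rangle$, where $\xi_1,\dots,\xi_k$ are orthonormal eigenvectors for the $k$ largest eigenvalues $\lambda_1\ge\dots\ge\lambda_k$. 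Applying $\mathrm{Ric}_k(\dot\gamma,\cdot)\ge k$ to the orthonormal frame $\{\dot\gamma,\xi_1,\dots,\xi_k\}$ gives $\sum_{i=1}^k\langle\widehat R_t\xi_i,\xi_i\rangle\ge\sum_{i=1}^k\mathrm{sec}(\dot\gamma,\xi_i)\ge k$, while Cauchy--Schwarz gives $\sum_{i=1}^k\lambda_i^{2}\ge\tfrac1k\psi^{2}$; hence $u:=\psi/k$ satisfies $u'\le-u^{2}-1$, the Riccati inequality solved with equality by $\cot t$. Because $u(0)$ is finite whereas $\cot t\to+\infty$ as $t\to0^{+}$, a standard Riccati/Gr\"onwall comparison --- comparing $u$ with the solution of $v'=-v^{2}-1$ through a point, and using that $u$ cannot blow up where $\widehat S_t$ is finite --- yields $u(t)\le\cot t$ for every $t\in(0,t_{\mathrm{max}}]$ with $t<\pi$, the case $t$ near $0$ being automatic. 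Since any $k$-dimensional $H(t)\perp\mathcal V(t)$ lies in $\mathcal V(t)^{\perp}$, this gives $\Tr S_t|_{H(t)}\le\Tr(P_{H(t)}\widehat S_t|_{H(t)})\le\psi(t)\le k\cot t$.

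For the ``in particular'' clause, note first that the existence of a $k$-dimensional $H(t)\perp\mathcal V(t)$ already forces $\dim\mathcal V\le n-1-k$, so that hypothesis is exactly what makes the setup above non-empty. If moreover $t_{\mathrm{max}}\ge\pi$, then letting $t\nearrow\pi$ in $u(t)\le\cot t$ would force $u(t)\to-\infty$, i.e.\ $\widehat S_t\to-\infty$ as $t\to\pi^{-}\le t_{\mathrm{max}}$, contradicting finiteness of $\widehat S_t$ on the closed interval $[0,t_{\mathrm{max}}]$; hence $t_{\mathrm{max}}<\pi$. I expect the only genuinely delicate point to be the endpoint bookkeeping: the non-smooth times of the eigenvalue functions $\lambda_i(t)$ (handled by the max-of-smooth-functions/Dini-derivative argument) and the two-sided degeneracy of the $\cot$-comparison at $t=0$ and (for the second assertion) $t=\pi$. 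The structural inputs --- that full index makes $\widehat S_t$ smooth on all of $[0,t_{\mathrm{max}}]$ and that $\widehat R_t\ge R$ on $\mathcal V(t)^{\perp}$ --- are precisely Wilking's transverse Jacobi machinery recalled in Section~\ref{Jacobi-comp sect}, which I would simply quote.
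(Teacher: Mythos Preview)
Your proposal is correct and follows exactly the approach the paper indicates: the paper does not give a detailed proof but simply cites \cite[Remark 4.20]{GuijWilh} and explains in the remark following the lemma that the result comes from combining classical Riccati comparison with Wilking's transverse Jacobi equation, the initial trace hypothesis \eqref{tr hyp} being replaced by the automatically satisfied condition $\liminf_{t\to0}\bigl(k\cot t-\Tr S_t|_{H(t)}\bigr)\ge0$. Your write-up supplies precisely these details---the transverse Riccati operator $\widehat S_t$ on $\mathcal V(t)^\perp$ is smooth on $[0,t_{\mathrm{max}}]$ because $\mathcal V$ has full index, the top-$k$-eigenvalue sum $\psi$ satisfies $(\psi/k)'\le-(\psi/k)^2-1$, and the comparison with $\cot t$ is forced since $\psi$ is bounded near $0$ while $\cot t\to+\infty$---so there is nothing to add.
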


\begin{remark}
Like Lemma \ref{sing soon Ric_k Lemma}, Lemma \ref{comp with sing at 0},
follows by combining classical Riccati comparison (\cite{Esch},\cite{EschHein}%
) with the transverse the Jacobi equation from \cite{Wilk1}.

The analog in Lemma \ref{comp with sing at 0} of the initial trace Hypothesis
(\ref{tr hyp}) of Lemma \ref{sing soon Ric_k Lemma} is
\begin{equation}
\lim_{t\rightarrow0}\inf\left(  k\cot\left(  t\right)  -\Tr S_{t}%
|_{H(t)}\right)  \geq0. \label{nothing about everything}%
\end{equation}
Since (\ref{nothing about everything}) holds for all $k$--dimensional
subspaces of Jacobi fields along all geodesics in all Riemannian manifolds, it
is not included in the statement of Lemma \ref{comp with sing at 0}.
\end{remark}

There is an improved version of (\ref{small index implies short}) when
$\Lambda$ is nonsingular at $t=0.$ We give this in the next corollary which is
related to \cite[Proposition 3.4]{GonGui}, \cite[Theorem 1.1]{GuijWilh}, and
\cite[Theorem G]{GuamWilh}.

\begin{corollary}
\label{first cor} For $r>0,$ let $\gamma:\left[  0,\frac{\pi}{2}+r\right]
\longrightarrow M$ be a unit speed geodesic in a complete Riemannian
$n$--manifold $M$ with $Ric_{k}\left(  \dot{\gamma},\cdot\right)  $ $\geq k.$
Let $\Lambda$ be a Lagrangian subspace of normal Jacobi fields along $\gamma$
with Riccati operator $S.$ Set
\[
\mathcal{K}\equiv\left\{  \left.  J\in\Lambda\text{ }\right\vert \text{
}J\left(  t\right)  =0\text{ for some }t\in\left(  0,\frac{\pi}{2}+r\right]
\right\}  .
\]

If there is an $\ell$--dimensional subspace $U_{0}\subset\gamma^{\prime\perp
}$
so that for all $k$--dimensional subspaces $W\subset U_{0}$,
\begin{equation}
\label{init trace inequal}\Trace S_{0} |_{W} \leq k\cot\left(  \frac{\pi}%
{2}-r\right)  ,
\end{equation}
then $\dim\mathcal{K} \geq\ell-k+1.$
\end{corollary}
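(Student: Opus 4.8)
The plan is to argue by contradiction, assuming $\dim\mathcal{K}\leq\ell-k$, and to extract from this assumption a $k$-dimensional subspace $W\subset U_0$ on which the comparison in Lemma~\ref{comp with sing at 0} (applied in a suitably rotated frame) forces a conjugate point strictly before time $\frac{\pi}{2}+r$, contradicting the length of the domain of $\gamma$. The key observation is that the hypothesis~\eqref{init trace inequal} is precisely the initial trace hypothesis~\eqref{tr hyp} of Lemma~\ref{sing soon Ric_k Lemma} with $s_0=\frac{\pi}{2}-r$, so that one wants to run that lemma and then show that the resulting bound $\Tr S_t|_{H(t)}\leq k\cot\!\left(t+\frac{\pi}{2}-r\right)$ becomes singular at $t=\frac{\pi}{2}+r$, i.e.\ at the right endpoint — which is exactly where the trace of a Riccati operator along a Lagrangian can only blow up to $-\infty$ if some field of $\Lambda$ vanishes there or just before.

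First I would set $s_0=\frac{\pi}{2}-r\in(0,\pi)$ and, assuming for contradiction that $\dim\mathcal{K}\leq\ell-k$, intersect $U_0$ with the space of fields orthogonal to a complement-controlling subspace so as to produce a $k$-dimensional $W_0\subset U_0$ with $W_0(t)$ avoiding the "bad" directions coming from $\mathcal{K}$; concretely, since $\mathcal{K}(0)$ has dimension at most $\ell-k$ inside the $\ell$-dimensional $U_0$, the orthogonal complement of $\mathcal{K}(0)$ within $U_0$ has dimension at least $k$, so pick $W_0$ to be a $k$-dimensional subspace of $U_0\cap\mathcal{K}(0)^{\perp}$. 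Then~\eqref{init trace inequal} gives $\Trace S_0|_{W_0}\leq k\cot(s_0)$, so Lemma~\ref{sing soon Ric_k Lemma} applies with this $W_0$: letting $\mathcal{V}\subset\Lambda$ be the fields orthogonal to $W_0$ at $0$ and $H(t)=\mathcal{V}(t)^{\perp}$, provided $\mathcal{V}$ has full index on $[0,t_{\max}]$ we obtain $\Tr S_t|_{H(t)}\leq k\cot(t+s_0)$ for all $t$ in the domain.

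Next I would push $t$ toward $\frac{\pi}{2}+r$. Since $s_0=\frac{\pi}{2}-r$, we have $t+s_0\to\pi$ as $t\to\frac{\pi}{2}+r$, and $k\cot(t+s_0)\to-\infty$; but $\Tr S_t|_{H(t)}$ stays finite on any interval where $H(t)$ is defined and no field of $\Lambda$ lying "along $H$" degenerates — more precisely, by Lemma~\ref{S well dfned} the Riccati operator is defined on $\mathcal{K}(t)^{\perp}$, and the only way the trace estimate can fail to give a finite bound is if $H(t)$ meets $\mathcal{K}(t)$ nontrivially for $t$ arbitrarily close to $\frac{\pi}{2}+r$, i.e.\ some nonzero field of $\Lambda$ vanishes at such a $t$. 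In either case one concludes that there is a field $J\in\Lambda\setminus\{0\}$ with $J(t)=0$ for some $t\in(0,\frac{\pi}{2}+r]$ whose presence is incompatible with $W_0$ lying in $\mathcal{K}(0)^{\perp}$ in the way we arranged (equivalently, this forces $\dim\mathcal{K}$ to be larger than assumed), and the contradiction yields $\dim\mathcal{K}\geq\ell-k+1$.

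I expect the main obstacle to be the full-index bookkeeping: Lemma~\ref{sing soon Ric_k Lemma} requires $\mathcal{V}$ to be of full index on the whole interval, and in general $\mathcal{V}=W_0^{\perp}\cap\Lambda$ need not be, so one must either replace $\mathcal{V}$ by the minimal full-index subspace containing it and carefully track how that changes $\dim\mathcal{V}$, $\dim H(t)$, and the conjugate-point count, or else run the argument on the largest subinterval $[0,t_1]$ on which full index holds and argue that $t_1=\frac{\pi}{2}+r$ is forced. Keeping the dimension count $\ell-k+1$ exact through this step — reconciling $\dim\mathcal{K}$, $\operatorname{codim}_{U_0}\mathcal{K}(0)$, and the count of vanishing fields — is the delicate part; everything else is a direct invocation of the comparison lemmas together with the blow-up of $\cot$ at $\pi$.
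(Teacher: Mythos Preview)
Your approach is exactly the paper's: assume $\dim\mathcal{K}\leq\ell-k$, pick a $k$-dimensional $W_0\subset U_0\cap\mathcal{K}(0)^\perp$, apply Lemma~\ref{sing soon Ric_k Lemma} with $s_0=\frac{\pi}{2}-r$, and let the cotangent blow up against Lemma~\ref{S well dfned}. The full-index worry you flag is not an obstacle: any $J\in\Lambda$ vanishing at some $t\in(0,\frac{\pi}{2}+r]$ lies in $\mathcal{K}$ by definition, so $J(0)\in\mathcal{K}(0)\perp W_0$ and hence $J\in\mathcal{V}$ (and trivially so if $J(0)=0$), which means $\mathcal{V}$ has full index on all of $[0,\frac{\pi}{2}+r]$ automatically---no further bookkeeping is required. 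One minor correction to your dimension count: $\mathcal{K}(0)$ need not lie inside $U_0$, so compute $\dim(U_0\cap\mathcal{K}(0)^\perp)$ in $\gamma'(0)^\perp$ via $\dim U_0+\dim\mathcal{K}(0)^\perp-\dim(U_0+\mathcal{K}(0)^\perp)\geq\ell-\dim\mathcal{K}\geq k$, as the paper does.
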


\begin{proof}
Suppose that $\dim\mathcal{K} <\ell-k+1.$ Since all of the following are
subspaces of the $\left(  n-1\right)  $--dimensional space $\gamma^{\prime
}\left(  0\right)  ^{\perp},$ we get that
\begin{align*}
\dim(U_{0}\cap\mathcal{K}\left(  0\right)  ^{\perp})  &  = \dim U_{0}
+\dim\mathcal{K}\left(  0\right)  ^{\perp}-\dim( U_{0}+\mathcal{K}\left(
0\right)  ^{\perp} )\\
&  \geq\dim U_{0} +n-1-\dim\mathcal{K}(0) -\left(  n-1\right) \\
&  \geq\dim U_{0} -\dim\mathcal{K}\\
&  >\ell-\left(  \ell-k+1\right) \\
&  =k-1.
\end{align*}

Since $\dim(U_{0}\cap\mathcal{K}\left(  0\right)  ^{\perp})\geq k,$ inequality
\eqref{init trace inequal} together with Lemma \ref{sing soon Ric_k Lemma}
gives us that for all values of $t\in\left(  0,\frac{\pi}{2}+r\right]  $,
there is a nonzero $v\in\mathcal{K}\left(  t\right)  ^{\perp}$ so that
\[
\left\langle S_{t}v,v\right\rangle \leq\cot\left(  \frac{\pi}{2}-r+t\right)
\left\vert v\right\vert ^{2}.
\]
But $\lim\nolimits_{s\rightarrow\pi^{-}}\cot\left(  s\right)  =-\infty$; thus
there is some $t_{1}\in\left(  0,\frac{\pi}{2}+r\right]  $ so that $S_{t}$ is
not well defined on $\mathcal{K}\left(  t\right)  ^{\perp}$. Since this
contradicts Lemma \ref{S well dfned}, $\dim\mathcal{K}\geq\ell-k+1$.
\end{proof}

\section{The index of a geodesic}

\label{sect:index} Some of the results of this section are well known and can
be found in \cite{Miln}; the exception is probably the Hingston-Kalish
Theorem, which can be found in \cite{HingKa}.

\subsection{The endpoints case}

For a pair of points $p$, $q\in M$, let $\Omega_{p,q}$ denote the space of
piecewise smooth paths connecting $p$ to $q,$ that are parameterized on
$\left[  0,1\right]  ,$ and let
\[
E:\Omega_{p,q}\rightarrow\lbrack0,\infty),\qquad E(\alpha)=\frac{1}{2}\int%
_{0}^{1}\,|\alpha^{\prime}|^{2}\,dt
\]
denote the energy function. $E$ is related to arc length via
\begin{equation}
\length\left(  \alpha\right)  \leq\sqrt{2E\left(  \alpha\right)  },
\label{Ener vs arc len}%
\end{equation}
with equality if and only if $\alpha$ has constant speed (\cite{Pet}, pages 182--183).

Following \cite{Miln}, we will work with a finite dimensional submanifold
$P\subset\Omega_{p,q}$ that is homotopy equivalent to $\Omega_{p,q}$, and we
will continue using $E$ for the restriction of the energy function to $P$. The
critical points of $E$ are the unit geodesics connecting $p$ to $q$.

\begin{definition}
Let $\gamma:[0,1]\rightarrow M$ be a geodesic between $p$ and $q$. The index
of $\gamma$ is the index of $\gamma$ as a critical point of $E$.
\end{definition}

When $p$ and $q$ are not conjugate along $\gamma$, $\gamma$ is a nondegenerate
critical point of $E$. When they are conjugate, a kernel appears agreeing with
the set of Jacobi fields vanishing at $p$ and $q$ simultaneously. In both
cases, the Morse Index Lemma affirms that the index of $\gamma$ agrees with
the number of conjugate points to $\gamma(0)$ along $\gamma$ in the interval
$(0,1)$.

\subsection{The endmanifold case}

Given two smooth embedded submanifolds $N$ and $\tilde{N}$ of a Riemannian
manifold $M$, we let $\Omega(N,\tilde{N})$ be the space of all piecewise
smooth curves from $N$ to $\tilde{N}$ parameterized on $\left[  0,b\right]  .$
It is well known that the critical points of the energy functional
$E:\Omega(N,\tilde{N})\longrightarrow\mathbb{R}$ are precisely the geodesics
$\gamma:\left[  0,b\right]  \longrightarrow M$ from $N$ to $\tilde{N}$ that
are normal to $N$ and $\tilde{N}$ at both endpoints.

In what follows, we only consider continuous variations $f:\left[  0,b\right]
\times\left(  -\varepsilon,\varepsilon\right)  \longrightarrow M$ of such
geodesics $\gamma$ with
\begin{equation}
f\left(  0,s\right)  \in N,\quad f\left(  b,s\right)  \in\tilde{N},\quad\text{
and }\quad V(t):=\frac{\partial f}{\partial s}(t,0). \label{ends in N
and  N tilde}%
\end{equation}
In addition, assume that there is a partition $0=t_{0}<t_{1}<t_{2}%
<\cdots<t_{k+1}=b$ so that
\begin{equation}
f|_{\left[  t_{i-1},t_{i}\right]  \times\left(  -\varepsilon,\varepsilon
\right)  \text{ }}\text{is smooth for all }i\in\left\{  1,2,\ldots,k\right\}
. \label{pw sm cond}%
\end{equation}
The second variation formula gives us
\[
E^{\prime\prime}\left(  0\right)  =-\int_{0}^{b}\left\langle V^{\prime\prime
}+R\left(  V,\dot{\gamma}\right)  \dot{\gamma},V\right\rangle dt+\left.
\left\langle V^{\prime}-S_{\gamma^{\prime}}V,V\right\rangle \right\vert
_{0}^{b}+\sum_{i}\left\langle V^{\prime}\left(  t_{i}^{-}\right)  -V^{\prime
}\left(  t_{i}^{+}\right)  ,V\left(  t_{i}\right)  \right\rangle ,
\]
where $S_{\gamma^{\prime}}$ is the shape operator of $N$ or $\tilde{N}$ for
the normal $\gamma^{\prime},$
\[
V^{\prime}\left(  t_{i}^{-}\right)  =\lim\nolimits_{t\rightarrow t_{i}^{-}%
}V^{\prime}\left(  t\right)  ,\quad\text{ and }\quad V^{\prime}\left(
t_{i}^{+}\right)  =\lim\nolimits_{t\rightarrow t_{i}^{+}}V^{\prime}\left(
t\right)  .
\]
(See, for instance, \cite[pages 198-199]{doCarm})

As usual the index of the critical point $\gamma$ of $E:\Omega(N, \tilde{N})
\longrightarrow\mathbb{R}$ is defined to be the maximal dimension of the space
of variation fields $V$ of variations $f$ that satisfy \eqref{ends in
N and N tilde} and \eqref{pw sm cond} for which $E^{\prime\prime}\left(
0\right)  $ is negative.

Let $\Lambda_{N}$ be as in Example \ref{ex:submanifold_Lagrangian} and set
\begin{align*}
\mathcal{K}  &  \equiv\mathrm{span}\left\{  \left.  J\in\Lambda_{N}\text{
}\right\vert \text{ }J\left(  t\right)  =0\text{ for some }t\in\left(
0,b\right]  \right\}  \text{ and}\\
\mathcal{K}_{b}  &  \equiv\left\{  \left.  J\in\Lambda_{N}\text{ }\right\vert
\text{ }J\left(  b\right)  =0\right\}  .
\end{align*}

Denote by $\Lambda_{N,\tilde{N}}$ the subspace of $\Lambda_{N}$ whose fields
are tangent to $\tilde{N}$ at $\gamma(b)$, and let $A$ be the symmetric
bilinear form
\begin{align}
A  &  :\Lambda_{N,\tilde{N}}\times\Lambda_{N,\tilde{N}}\longrightarrow
\mathbb{R},\nonumber\\
A\left(  J_{1},J_{2}\right)   &  =\left\langle J_{1}^{\prime}\left(  b\right)
-S_{\dot{\gamma}\left(  b\right)  }J_{1},J_{2}\left(  b\right)  \right\rangle
. \label{HK A dfn}%
\end{align}

Thus $A$ is the difference in $T_{\gamma(b)}M$ of the Riccati operator of
$\Lambda$ and the shape operator of $\tilde{N}$ at $\gamma(b)$ in the
direction of $\gamma^{\prime}(1)$.

The following theorem was proven in \cite{HingKa} (cf also \cite{Kal}).

\begin{theorem}
[Hingston-Kalish Theorem]\label{hingst-kalish thmm} The index of $\gamma$ is
equal to
\begin{equation}
\Index A +\text{number of focal pts in }\left(  0,b\right]  -\dim
(\mathcal{K}_{b}\left(  b\right)  \cap T\tilde{N}^{\perp}) \label{our HK}%
\end{equation}
where the focal points are counted with their multiplicities.
\end{theorem}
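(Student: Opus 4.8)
The plan is to follow the classical Morse-theoretic template for the endmanifold index problem, adapting the Morse Index Theorem to handle focal points and the two boundary terms simultaneously. First I would set up the index form $I$ on the space of piecewise-smooth normal vector fields $V$ along $\gamma$ with $V(0)\in T_{\gamma(0)}N$ and $V(b)\in T_{\gamma(b)}\tilde N$, namely $I(V,V)=E''(0)$ as written in the second variation formula. The goal is to compute the maximal dimension of a negative-definite subspace of $I$. The standard device is to split $V$ into a ``broken Jacobi field'' part and a remainder: given a fine enough partition $0=t_0<\cdots<t_{m+1}=b$ with no two consecutive points focally related, every admissible $V$ decomposes uniquely as $V=V_{\mathrm{brk}}+V_\perp$, where $V_{\mathrm{brk}}$ is the unique broken $\Lambda_N$-Jacobi field agreeing with $V$ at each $t_i$ and satisfying the $N$- and $\tilde N$-boundary conditions at the ends, and $V_\perp$ vanishes at every $t_i$ and satisfies Dirichlet conditions. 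One shows $I$ is positive-definite on the $V_\perp$ block and that the two blocks are $I$-orthogonal, so the index of $I$ equals the index of $I$ restricted to broken Jacobi fields, a finite-dimensional space parametrized by the tuple $(V(t_1),\ldots,V(t_m),V(b))$ (with $V(b)\in T\tilde N$, $V(0)\in TN$ determined as usual).

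Next I would run the standard deformation-of-partition argument. On the finite-dimensional space of broken Jacobi fields, $I$ is a quadratic form whose nullity and index change only as one of the interior breakpoints $t_i$ crosses a focal point of $N$ along $\gamma$; at each such crossing the index jumps by exactly the focal multiplicity (this is the classical computation using that near an isolated focal point the Jacobi field with prescribed small endpoint data develops the appropriate negative directions). Letting $t_1\to 0$, the contribution from the left end collapses to $0$ because $\Lambda_N$ is nonsingular at $t=0$ (the shape-operator boundary term at $0$ is subsumed into the Riccati operator $S_0=S_{\gamma'(0)}$ of $\Lambda_N$). The total jump accumulated as the first breakpoint sweeps from near $0$ up to $b$ is precisely the number of focal points of $N$ in $(0,b)$ counted with multiplicity. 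What remains is the ``boundary stratum'': the index of $I$ on the space of fields with no interior breakpoint, i.e. honest fields of $\Lambda_{N}$ (after projecting to the $\tilde N$-tangential condition), which is exactly the index of the form $A$ on $\Lambda_{N,\tilde N}$ defined in (\ref{HK A dfn}), since on $\Lambda_{N,\tilde N}$ the second variation reduces to the boundary term $\langle J'(b)-S_{\dot\gamma(b)}J,\,J(b)\rangle = A(J,J)$.

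The remaining bookkeeping is the correction term. Focal points counted up to and including $t=b$ may include directions that also lie in $T\tilde N^\perp$ at $\gamma(b)$; these are Jacobi fields in $\mathcal K_b$ with $J(b)=0$ whose ``arrival direction'' $J'(b)$ is normal to $\tilde N$. Such fields are simultaneously counted once as a focal point in $(0,b]$ and would be double-counted (or mis-counted) against the nullspace of $A$ at the endpoint; subtracting $\dim(\mathcal K_b(b)\cap T\tilde N^\perp)$ reconciles the two. Concretely, I would carefully track, as $t\to b^-$, which negative directions persist versus which get absorbed into the kernel of the limiting form, and verify that the defect is exactly this intersection dimension — this is where Lemma \ref{S well dfned} is used, to guarantee $S_t$ is well-defined on $\mathcal K(t)^\perp$ so the splitting above is legitimate even at focal times.

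The main obstacle I anticipate is precisely this last step: getting the endpoint accounting right when $\gamma(0)$ and $\gamma(b)$ may be focal and $\tilde N$ may be tangent to some vanishing Jacobi fields, so that the ``number of focal points in $(0,b]$'' and ``$\Index A$'' overlap. One has to be scrupulous about half-open versus closed intervals and about whether a given limiting direction contributes to the index, the nullity, or neither of $A$; the term $-\dim(\mathcal K_b(b)\cap T\tilde N^\perp)$ is exactly the fudge factor that makes the limit continuous, and verifying its value requires a local analysis of broken Jacobi fields with a breakpoint tending to $b$. Everything else is a routine, if lengthy, transcription of the Morse Index Theorem for endmanifolds (as in \cite{Miln}, \cite{doCarm}) — but since the precise statement with the correction term is what \cite{HingKa} contributes, I would cite their argument for these delicate continuity-of-index computations rather than reproduce them in full.
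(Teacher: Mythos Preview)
Your sketch is a reasonable outline of how one might prove the endmanifold Morse Index Theorem from scratch, but you should be aware that the paper does \emph{not} prove this theorem at all. The paper attributes Theorem~\ref{hingst-kalish thmm} entirely to \cite{HingKa} (and \cite{Kal}) and simply quotes it. The only argument the paper supplies is the short paragraph after the statement, which checks that the paper's formulation (\ref{our HK}) agrees with Hingston and Kalish's original formulation (\ref{origin HK}): they write the index as $\Index A + (\text{focal points in }(0,b)) + m_T$ with $m_T=\dim\mathrm{Proj}_{\tilde N}\mathcal{K}_b(b)$, and the paper observes that $m_T=\dim\mathcal{K}_b(b)-\dim(\mathcal{K}_b(b)\cap T\tilde N^\perp)$, so that adding the multiplicity of $b$ as a focal point to the count and subtracting $\dim(\mathcal{K}_b(b)\cap T\tilde N^\perp)$ gives the same total.

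So your proposal goes well beyond what the paper does. If your goal is to match the paper, the entire content is: cite \cite{HingKa}, then verify the identity $m_T=\dim\mathcal{K}_b(b)-\dim(\mathcal{K}_b(b)\cap T\tilde N^\perp)$ to reconcile the closed interval $(0,b]$ in (\ref{our HK}) with the open interval $(0,b)$ in (\ref{origin HK}). Your broken-Jacobi-field deformation argument is essentially the strategy of \cite{HingKa} and \cite{Kal} themselves, and your closing remark that you would ultimately cite their paper for the endpoint continuity analysis is exactly what the paper does from the outset. One small correction to your sketch: Lemma~\ref{S well dfned} is not used in the paper for this theorem (the theorem is imported wholesale), so there is no need to invoke it here.
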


There is a small difference in the formulation of Theorem
\ref{hingst-kalish thmm} and the statement in \cite{HingKa}. Hingston and
Kalish express the index of $\gamma$ as
\begin{equation}
\Index A +\text{number of focal pts in }(0,b)+m_{T} \label{origin HK}%
\end{equation}
where
\[
m_{T}=\dim\mathrm{Proj}_{\tilde{N}}\mathcal{K}_{b}\left(  b\right)  ,
\]
and $\mathrm{Proj}_{\tilde{N}}$ is orthogonal projection onto $\tilde{N}.$
Since $\dim\mathcal{K}_{b}\left(  b\right)  $ is the multiplicity of $b$ as a
$N$--focal point,
\begin{align*}
m_{T}  &  =\dim\mathcal{K}_{b}\left(  b\right)  -\dim(\mathcal{K}_{b}\left(
b\right)  \cap T\tilde{N}^{\perp})\\
&  =\text{the multiplicity of }b\text{ as a }N\text{-focalpoint}%
-\dim(\mathcal{K}_{b}\left(  b\right)  \cap T\tilde{N}^{\perp}),
\end{align*}
and (\ref{our HK}) and (\ref{origin HK}) agree.

\section{Conjugate Radius and Positive Curvature\label{focal and pos sect}}

\label{sect:proof_Thm_A}

By taking the contrapositive of \eqref{small index implies short} in Lemma
\ref{comp with sing at 0}, we get the following.

\begin{lemma}
\label{long geod lemma} Let $M$ be a complete Riemannian manifold with
$\mathrm{Ric}_{k}\geq k.$ Any geodesic in $M$ of length $>\pi$ has index $\geq
n-k.$
\end{lemma}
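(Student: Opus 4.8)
The plan is a proof by contradiction resting on the ``in particular'' clause \eqref{small index implies short} of Lemma \ref{comp with sing at 0}. Suppose $\gamma\colon[0,L]\to M$ is a unit-speed geodesic with $L>\pi$ whose index satisfies $\Index\gamma\le n-k-1$; I want to contradict \eqref{small index implies short}.

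The first step is to repackage the index hypothesis in terms of a Lagrangian. By the Morse Index Lemma recalled in Section \ref{sect:index}, $\Index\gamma$ equals the number of conjugate points of $\gamma(0)$ lying in $(0,L)$, counted with multiplicity. Let $\Lambda_0$ be the point Lagrangian of normal Jacobi fields along $\gamma$ vanishing at $\gamma(0)$. For each conjugate time $t$, the space $\{J\in\Lambda_0 : J(t)=0\}$ has dimension equal to the multiplicity of $t$, and the minimal full-index subspace on an interval $I$ is the sum of these spaces over the conjugate times in $I$ (see \eqref{eq:index_subspace}); hence, for any $0<a<b<L$, the minimal full-index subspace on $[a,b]$ has dimension at most $\Index\gamma\le n-k-1$, since the conjugate times it involves lie in $[a,b]\subset(0,L)$.

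The obstacle is that $\Lambda_0$ is singular at $t=0$, so Lemma \ref{comp with sing at 0} cannot be applied to the whole geodesic: on $[0,L]$, being of full index at $0$ forces the subspace to be all of $\Lambda_0$, which has dimension $n-1$. The remedy is to move the basepoint into the interior. Since $L>\pi$, I would choose $0<a<b<L$ with $b-a>\pi$ and set $\bar\gamma(s)=\gamma(a+s)$ for $s\in[0,b-a]$. Restriction to $[a,b]$ turns $\Lambda_0$ into an $(n-1)$--dimensional Lagrangian $\Lambda$ of normal Jacobi fields along $\bar\gamma$ (namely those extending backward to Jacobi fields of $\gamma$ vanishing at $\gamma(0)$), and I let $\mathcal K\subset\Lambda$ be its minimal full-index subspace on $[0,b-a]$. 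By the previous paragraph, $\dim\mathcal K\le\Index\gamma\le n-1-k$.

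The final step is to apply Lemma \ref{comp with sing at 0} to $\bar\gamma$, the Lagrangian $\Lambda$, and $\mathcal V=\mathcal K$. The curvature hypothesis $\mathrm{Ric}_k(\dot{\bar\gamma},\cdot)\ge k$ holds because $\mathrm{Ric}_k M\ge k$, and the bound $\dim\mathcal V(t)\le\dim\mathcal V\le n-1-k$ leaves room for an orthogonal $k$--plane $H(t)\subset\bar\gamma'(t)^{\perp}$ of the kind the lemma requires. Then \eqref{small index implies short} forces $b-a<\pi$, contradicting $b-a>\pi$; hence $\Index\gamma\ge n-k$. I expect the only points needing care to be the dimension count $\dim\mathcal K\le\Index\gamma$ furnished by the Morse Index Lemma, and the passage to the interior subinterval $[a,b]$, which sidesteps the singularity of the point Lagrangian at $t=0$; the remaining steps are direct invocations of Lemma \ref{comp with sing at 0}.
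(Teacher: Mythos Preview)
Your argument is correct and takes essentially the same approach as the paper: both obtain the lemma as the contrapositive of \eqref{small index implies short} applied to the point Lagrangian together with the Morse Index Lemma. The paper's proof is literally the single sentence preceding the lemma's statement; your restriction to an interior subinterval $[a,b]\subset(0,L)$ is a careful way to make the full-index hypothesis unambiguous at the initial time, a detail the paper leaves implicit via the Remark following Lemma~\ref{comp with sing at 0}.
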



To complete the proof of Theorem \ref{Intermeadiate Ricci Thm}, we will need
the following result from Petersen's textbook.

\begin{theorem}
[Theorem 6.5.2 \cite{Pet}]\label{peter morse them} Let $f:N\longrightarrow
\mathbb{R}$ be a smooth proper function defined on a differentiable manifold
$N$. If $b$ is a regular value of $f$ and all critical points in $f$ in
$f^{-1}\left[  a,b\right]  $ have index $\geq m$, then the inclusion%
\[
f^{-1}\left(  -\infty,a\right]  \subset f^{-1}\left(  -\infty,b\right]
\]
is $\left(  m-1\right)  $--connected.
\end{theorem}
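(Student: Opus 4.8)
The plan is to deduce Theorem~\ref{peter morse them} from standard Morse-theoretic machinery, since it is the ``minimax''-free version of the fundamental theorem of Morse theory packaged for connectivity statements. First I would recall the two classical deformation facts: (i) if $f^{-1}[a,b]$ contains no critical values, then $f^{-1}(-\infty,a]$ is a deformation retract of $f^{-1}(-\infty,b]$ (using the negative gradient flow of $f$, rescaled to be complete, which exists by properness); and (ii) if there is a single critical value $c\in(a,b)$ with critical points all of index $\ge m$, then $f^{-1}(-\infty,b]$ is homotopy equivalent to $f^{-1}(-\infty,a]$ with cells of dimension $\ge m$ attached (one cell per critical point in the nondegenerate case; in general one uses the Morse--Bott / Gromov--style handle decomposition, or simply cites Milnor's \cite{Miln} treatment adapted to a band with one critical value). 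The hypothesis that $b$ is a regular value guarantees the endpoint sublevel set is a manifold-with-boundary, so the flow argument applies cleanly near $b$.

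Second, I would pass from ``attaching cells of dimension $\ge m$'' to ``$(m-1)$--connected inclusion.'' This is the elementary observation that if $Y$ is obtained from $X$ by attaching cells of dimension $\ge m$, then $\pi_j(Y,X)=0$ for $j\le m-1$: indeed any map of a pair $(D^j,S^{j-1})\to(Y,X)$ with $j\le m-1$ can be homotoped off the (higher-dimensional) cells by general position/cellular approximation, landing entirely in $X$. Equivalently, the inclusion $X\hookrightarrow Y$ induces isomorphisms on $\pi_j$ for $j\le m-2$ and a surjection on $\pi_{m-1}$, which is exactly $(m-1)$--connectedness in the sense recalled just before the statement of Theorem~\ref{quant conn with norm II thm}.

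Third, I would remove the ``single critical value'' restriction. By properness of $f$, the critical values in the compact interval $[a,b]$ form a closed set; I would want them to be finite, or at least to handle the general case by exhausting $[a,b]$ by finitely many subintervals each containing at most one critical value and composing the connectivity conclusions (a composition of $(m-1)$--connected inclusions is $(m-1)$--connected, since $\pi_j(-,-)=0$ is preserved under composition via the long exact sequences of the triples). If one does not wish to assume isolated critical values, the cleanest route is to invoke directly the statement as proved in Petersen's book \cite[Theorem~6.5.2]{Pet} — which is, after all, exactly what the theorem is attributed to — treating the above as a sketch of why it holds rather than reproving it from scratch.

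The main obstacle is the degenerate-critical-point bookkeeping: when critical points of $f$ are not isolated or not nondegenerate, ``attaching a cell of dimension $\ge m$'' must be replaced by the correct statement about the topology change across a critical level, and one must verify that the relevant relative homotopy groups still vanish through dimension $m-1$. In the applications in this paper $f$ will be (a smooth model of) the energy functional, whose critical submanifolds are well understood, so in practice this is not a genuine difficulty; but a fully rigorous standalone proof would need either a Morse--Bott hypothesis or Petersen's more careful treatment, which is why I would ultimately cite \cite[Theorem~6.5.2]{Pet} for the precise statement and only indicate the gradient-flow-plus-cell-attachment idea above.
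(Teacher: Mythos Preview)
Your proposal is correct, and in fact goes further than the paper does: the paper gives no proof at all of this theorem, simply citing it as Theorem~6.5.2 in Petersen's textbook \cite{Pet} and using it as a black box. Your sketch of the gradient-flow deformation plus cell-attachment argument, together with the observation that attaching cells of dimension $\geq m$ yields an $(m-1)$--connected inclusion, is the standard reasoning behind the cited result, and your ultimate recourse to \cite{Pet} for the degenerate-critical-point bookkeeping is exactly what the paper does from the outset.
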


The above theorem will be applied to finite dimensional approximations of the
space of closed paths based at some $p\in M$. A standard reference for this is
\cite[Chapter 16]{Miln}.

\begin{proof}
[Proof of Theorem \ref{Intermeadiate Ricci Thm}]Without loss of generality,
assume that $M$ is simply connected. There is nothing to prove if $k=n-1,$ so
we will assume that $k\leq n-2.$

Choose some point $p\in M$ with $\mathrm{conj}_{p}>\frac{\pi}{2},$ and some
number $b$ with $\pi<b<2\,\mathrm{conj}_{p}$. Let $\Omega_{p}$ be the loops
based at $p$ parameterized on $\left[  0,1\right]  .$

If $E:\Omega_{p}\longrightarrow\mathbb{R}$ denotes the energy functional and
$\sigma$ is a critical point of $E$ with $\length\left(  \sigma\right)  >b,$
then by Lemma \ref{long geod lemma},
\begin{equation}
\mathrm{Index}\left(  \sigma\right)  \geq n-k. \label{long geod index}%
\end{equation}

Recall from \cite[Theorem 16.2]{Miln} that there is a finite dimensional
approximation, $N,$ of $\Omega_{p}$ so that for any $a\geq0$, the sublevel set
$N\cap E^{-1}[0,a]$ is a deformation retract of $E^{-1}[0,a]$. Thus we can
apply Theorem \ref{peter morse them} to $N$ and obtain similar statements for
$\Omega_{p}$. We will do that in what follows without any further comment.

Since $\sigma$ has constant speed, it follows from (\ref{Ener vs arc len})
that%
\[
E\left(  \sigma\right)  =\frac{1}{2}\length\left(  \sigma\right)  ^{2}.
\]
Since $n-k\geq2,$ the combination of Theorem \ref{peter morse them} and
Inequality \eqref{long
geod index} gives that the inclusion $E^{-1}\left[  0,b^{2}/2\right]
\hookrightarrow\Omega_{p}$ is $1$--connected, i.e., $\pi_{1}(\Omega_{p}%
,E^{-1}[0,b^{2}/2])=0$. From $\pi_{1}(M)=0$, we get that $\pi_{0}(\Omega
_{p})=0$, and the long exact sequence of the pair $(\Omega_{p},E^{-1}%
[0,b^{2}/2])$ reads%

\[
0=\pi_{1}(\Omega_{p},E^{-1}[0,b^{2}/2])\longrightarrow\pi_{0}(E^{-1}%
[0,b^{2}/2])\longrightarrow\pi_{0}(\Omega_{p})=0.
\]

\medskip

\noindent Thus $\pi_{0}\left(  E^{-1}\left[  0,b^{2}/2\right]  \right)  =0$,
i.e., $E^{-1}\left[  0,b^{2}/2\right]  $ is connected.

Next we show that there is no geodesic loop $\gamma:[0,1]\longrightarrow M$ at
$p$ of length $\ell$ with $\ell\leq b.$ Since $E^{-1}[0,b^{2}/2]$ is
connected, if there were such a loop, then $\gamma$ would be homotopic to a
constant loop through loops $\gamma_{t}$ of energy $\leq b^{2}/2$ and hence of
length
\[
\length\left(  \gamma_{t}\right)  \leq\sqrt{2E\left(  \gamma_{t}\right)  }%
\leq\sqrt{2b^{2}/2}=b
\]
by \eqref{Ener vs arc len}.

On the other hand, by the original Long Homotopy Lemma (see \cite{AbrMey} and
\cite{CheegGrm}), any homotopy of $\gamma$ to a constant loop must pass
through a curve whose length is at least $2\,\mathrm{conj}_{p}>b,$ yielding a
contradiction. Thus there can be no geodesic loop at $p$ with length $\leq b.$
Combining this with (\ref{long geod index}), it follows that all critical
points of $E:\Omega_{p}\longrightarrow\mathbb{R}$ have index $\geq n-k.$ Thus
by Theorem \ref{peter morse them}, $\Omega_{p}$ is $\left(  n-k-1\right)
$--connected, and $M$ is $\left(  n-k\right)  $--connected.
\end{proof}

\section{Quantitative Connectivity Principle}

\label{sect:proof_of_thm_B} In this section we prove Theorem
\ref{quant conn with norm II thm}. Our first result is a simple linear algebra
fact that will allow us to turn the trace estimates of \eqref{tr hyp} and
\eqref{small
future Inequal} into estimates on the values of $S_{t}$.

\begin{proposition}
\label{lin alg prop} Let $A:U\longrightarrow U$ be a self adjoint endomorphism
of an $\ell$--dimensional inner product space. Suppose that there is a
$k\in\left\{  1,2,\ldots,\ell-1\right\}  $ so that for all $k$--dimensional
subspaces $W\subset U,$
\[
\Trace A|_{W} \leq k\cdot\lambda.
\]
Then there is an $\left(  \ell-k+1\right)  $--dimensional subspace $V\subset
U$ so that for all unit $v\in V,$%
\[
\left\langle A v ,v\right\rangle \leq\lambda.
\]

\end{proposition}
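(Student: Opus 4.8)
The plan is to diagonalize $A$ and read off the conclusion from the ordering of its eigenvalues. Since $A$ is self-adjoint, I would choose an orthonormal basis $e_{1},\dots,e_{\ell}$ of $U$ consisting of eigenvectors, with corresponding eigenvalues $\mu_{1}\geq\mu_{2}\geq\cdots\geq\mu_{\ell}$.

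First I would apply the hypothesis to the single $k$--dimensional subspace $W_{0}=\spann\{e_{1},\dots,e_{k}\}$ spanned by the eigenvectors with the $k$ largest eigenvalues. Then $\Trace A|_{W_{0}}=\mu_{1}+\cdots+\mu_{k}$, so the hypothesis gives $\mu_{1}+\cdots+\mu_{k}\leq k\lambda$. Since $\mu_{k}$ is the smallest of the $k$ numbers $\mu_{1},\dots,\mu_{k}$, it is at most their average, hence $\mu_{k}\leq\lambda$. Because the eigenvalues are nonincreasing, this forces $\mu_{i}\leq\lambda$ for every $i\geq k$.

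Now I would set $V=\spann\{e_{k},e_{k+1},\dots,e_{\ell}\}$, which has dimension $\ell-k+1$. For a unit vector $v=\sum_{i=k}^{\ell}a_{i}e_{i}\in V$ (so that $\sum_{i=k}^{\ell}a_{i}^{2}=1$) one computes $\langle Av,v\rangle=\sum_{i=k}^{\ell}\mu_{i}a_{i}^{2}\leq\lambda\sum_{i=k}^{\ell}a_{i}^{2}=\lambda$, which is the desired estimate.

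There is essentially no obstacle here; the only point worth isolating is that the averaged trace bound on the top eigenspace degrades to the pointwise bound $\mu_{k}\leq\lambda$ on the $k$--th eigenvalue, and hence on all the smaller ones. (One could equivalently invoke Ky Fan's principle that $W_{0}$ maximizes $\Trace A|_{W}$ over all $k$--dimensional $W$, but testing against the single well-chosen $W_{0}$ suffices and keeps the argument self-contained.)
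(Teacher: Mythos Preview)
Your proof is correct and essentially the same as the paper's: both diagonalize $A$ and test the trace hypothesis on the span of the $k$ eigenvectors with the largest eigenvalues to conclude that at most $k-1$ eigenvalues exceed $\lambda$, then take $V$ to be the span of the remaining eigenvectors. The only cosmetic difference is that the paper phrases this as ``at most $k-1$ eigenvalues satisfy $\langle Au_i,u_i\rangle>\lambda$'' while you phrase it as ``$\mu_k\leq\lambda$'' via averaging; these are equivalent.
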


\begin{proof}
Let $\left\{  u_{1},\ldots u_{m},v_{1},\ldots,v_{\ell-m}\right\}  $ be an
orthonormal set of eigenvectors for $A$, for which the first $m$-vectors
$\left\{  u_{1},\ldots u_{m}\right\}  $ is the maximal subset that satisfies
\[
\left\langle Au_{i},u_{i}\right\rangle >\lambda.
\]
Then $m\leq k-1.$ Thus
\[
\ell-m\geq\ell-\left(  k-1\right)  ,
\]
and the subspace $V$ spanned by $v_{1},\dots,v_{\ell-k+1}$ satisfies the
conclusion of the proposition.
\end{proof}

Next we state the main lemma for the proof of Theorem
\ref{quant conn with norm II thm}. It will gives us estimates on the index of
a geodesic orthogonal at its endpoints to a submanifold of sufficiently big dimension.

\begin{lemma}
\label{lem:KeyLemma} Let $M$ be a complete Riemannian $n$--manifold with
$\mathrm{Ric}_{k}M\geq k$, and let $N\subset M$ be an $\ell$--dimensional
submanifold for some $\ell\geq k$ that is compact, connected and embedded.

Suppose that $\gamma:\left[  0,b\right]  \longrightarrow M$ is a unit speed
geodesic perpendicular to $N$ at both endpoints such that for all
$k$--dimensional subspaces $W$ tangent to $N$ at the endpoints of $\gamma$, we
have
\begin{equation}
\left\vert \Tr S_{\gamma^{\prime}(0)}|_{W}\right\vert ,\left\vert
\Tr S_{\gamma^{\prime}(b)}|_{W}\right\vert \leq k\cot\left(  \frac{\pi}%
{2}-r\right)  \label{tr hypo ineq}%
\end{equation}
for some $0<r<\frac{\pi}{2}$.
%
Then if the length of $\gamma$ is larger than $2r$, we have
\[
\inx(\gamma)\geq2\ell-n-k+2.
\]

\end{lemma}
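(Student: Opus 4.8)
The plan is to estimate $\inx(\gamma)$ via the Hingston--Kalish formula (Theorem~\ref{hingst-kalish thmm}), applied with $N = \tilde N$, by bounding each of its three terms. Recall the formula gives
\[
\inx(\gamma) = \Index A + \#\{\text{focal pts in }(0,b]\} - \dim\bigl(\mathcal{K}_b(b)\cap T N^{\perp}\bigr),
\]
where $A$ is the bilinear form on $\Lambda_{N,N}$ from \eqref{HK A dfn} built from the Riccati operator of $\Lambda_N$ at $b$ minus the shape operator of $N$ at $\gamma(b)$. Since the last term is always $\le\dim\mathcal{K}_b(b)$, which is the multiplicity of $b$ as a focal point, the troublesome negative contribution is absorbed provided we count focal points generously; the real content is producing enough \emph{negativity} for $A$ together with focal points. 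I would first reduce to showing that the space of Jacobi fields in $\Lambda_N$ that vanish somewhere in $(0,b]$ \emph{or} on which $A$ is negative has dimension at least $2\ell - n - k + 2$.

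First I would apply Corollary~\ref{first cor} to the Lagrangian $\Lambda = \Lambda_N$ along $\gamma$, using $U_0 = T_{\gamma(0)}N$ (which has dimension $\ell$) and hypothesis \eqref{tr hypo ineq} at the endpoint $0$: since for the Lagrangian $\Lambda_N$ the Riccati operator $S_0$ equals the shape operator $S_{\gamma'(0)}$ (Example~\ref{ex:submanifold_Lagrangian}), the trace inequality \eqref{init trace inequal} holds on all $k$-dimensional $W\subset T_{\gamma(0)}N$, provided $b = \length(\gamma) > 2r$ so that $\gamma$ is defined past $\tfrac{\pi}{2} + r$ — wait, more carefully: Corollary~\ref{first cor} needs the geodesic defined on $[0,\tfrac\pi2 + r]$, but here $b$ could be only slightly more than $2r < \pi$. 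So instead I would run the transverse comparison directly: let $\mathcal V \subset \Lambda_N$ be the fields orthogonal at $0$ to a well-chosen $k$-dimensional $W_0\subset T_{\gamma(0)}N$; by Proposition~\ref{lin alg prop} applied to $A = S_{\gamma'(0)}|_{U_0}$ with $\lambda = \cot(\tfrac\pi2 - r)$, there is an $(\ell-k+1)$-dimensional subspace of $T_{\gamma(0)}N$ on which $S_{\gamma'(0)}$ has all values $\le\cot(\tfrac\pi2 - r)$; take $W_0$ in its complement if forced, and use Lemma~\ref{sing soon Ric_k Lemma} to propagate: along $H(t)$ one gets $\Tr S_t|_{H(t)} \le k\cot(t + \tfrac\pi2 - r)$, which blows up to $-\infty$ as $t + \tfrac\pi2 - r \to\pi$, i.e. as $t\to\tfrac\pi2 + r$. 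Since $b > 2r$ need not exceed $\tfrac\pi2 + r$, the blow-up argument must be adapted: the point is that on the interval $(0,b]$ with $b > 2r$, we are comparing against $\cot$ on $(\tfrac\pi2 - r, \tfrac\pi2 - r + b) \supset (\tfrac\pi2 - r, \tfrac\pi2 + r)$, an interval of length exceeding... hmm, $\tfrac\pi2 + r - (\tfrac\pi2 - r) = 2r < b$ only when $2r < b$, and this interval straddles $\tfrac\pi2$ but reaches $\tfrac\pi2 + r < \pi$, so $\cot$ stays finite. The correct mechanism is symmetry: apply the same comparison from the endpoint $b$ backwards, using \eqref{tr hypo ineq} at $b$, and combine the two one-sided estimates, forcing a conjugate/focal point in the interior once $b > 2r$ because the two comparison functions $\cot(t + \tfrac\pi2 - r)$ from the left and $\cot((b - t) + \tfrac\pi2 - r)$ from the right must cross, and where they would force $S_t$ to be simultaneously too small from both sides, $\mathcal{K}(t)^\perp$ cannot carry the Riccati operator — contradicting Lemma~\ref{S well dfned} unless a focal point occurs.

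Quantitatively, I would set up a dimension count in $T_{\gamma(0)}N \cong T_{\gamma(b)}N$ (both of dimension $\ell$, sitting inside the $(n-1)$-dimensional space $\gamma'^\perp$): the subspace $U_0^+ \subset T_{\gamma(0)}N$ on which $S_{\gamma'(0)} \le \cot(\tfrac\pi2 - r)$ has dimension $\ge \ell - k + 1$ by Proposition~\ref{lin alg prop}, and similarly $U_b^+ \subset T_{\gamma(b)}N$; their "intersection" inside $\gamma'^\perp$ after parallel transport has dimension $\ge 2(\ell - k + 1) - (n-1) = 2\ell - 2k - n + 3$. Each Jacobi field starting in this intersection either vanishes in $(0,b]$ (contributing to the focal-point count) or survives to $b$ with small Riccati value at both ends, making $A = S_b - S_{\gamma'(b)}|_{T N}$ negative on it — contributing to $\Index A$. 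Combined with the Lagrangian/full-index bookkeeping and the fact that $\Lambda_{N,N}$ has dimension related to how many fields stay tangent to $N$, the three Hingston--Kalish terms sum to at least $2\ell - n - k + 2$. I expect the main obstacle to be exactly this bookkeeping: correctly tracking the off-by-one and off-by-$k$ shifts among $\Index A$, the focal-point multiplicities, and the correction term $\dim(\mathcal{K}_b(b)\cap T N^\perp)$, and ensuring the comparison from the two endpoints is combined in a way that is valid even when $2r < b \le \tfrac\pi2 + r$ (so that no single-sided blow-up of $\cot$ is available and one genuinely needs the two-sided crossing argument). The subtlety that $b$ may be small — only $> 2r$ — rather than $> \pi$ as in Lemma~\ref{long geod lemma}, is what makes this lemma harder than the ingredients for Theorem~\ref{Intermeadiate Ricci Thm}, and is where I would spend the most care.
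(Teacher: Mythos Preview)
Your framework is right --- Hingston--Kalish plus the transverse comparison --- but the organization has two real problems that would block a clean proof.

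First, the ``two-sided crossing'' idea is the wrong mechanism. Running the comparison backward from $b$ uses a \emph{different} Lagrangian (the one adapted to $N$ at $\gamma(b)$), whose Riccati operator is not the $S_b$ appearing in $A$; intersecting subspaces via parallel transport does not connect these two operators. The paper's approach is simpler and asymmetric: propagate \emph{only forward} from $0$ using Lemma~\ref{sing soon Ric_k Lemma} to bound $\Tr S_b|_W$ from above by $k\cot(\tfrac{\pi}{2}-r+b)$, and then use the hypothesis~\eqref{tr hypo ineq} at $b$ directly as a \emph{lower} bound $\Tr S_{\gamma'(b)}|_W \ge -k\cot(\tfrac{\pi}{2}-r)$ on the shape operator. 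Since $A = S_b - S_{\gamma'(b)}$, this gives $\Tr A|_W \le k\bigl(\cot(\tfrac{\pi}{2}-r+b)+\cot(\tfrac{\pi}{2}-r)\bigr)$, which is negative exactly when $b>2r$. No backward Lagrangian, no crossing.

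Second, you apply Proposition~\ref{lin alg prop} at the wrong end. Using it on $S_{\gamma'(0)}$ to extract an $(\ell-k+1)$-dimensional subspace with pointwise bounds discards the uniform trace bound over \emph{all} $k$-planes that Lemma~\ref{sing soon Ric_k Lemma} needs as input. The paper keeps the trace hypothesis intact through the propagation and applies Proposition~\ref{lin alg prop} only at the end, to $A$. This is also why your dimension count lands at $2\ell-2k-n+3$, short by $k-1$: the extra $k-1$ comes from applying the linear-algebra proposition once (to $A$) rather than effectively twice. The paper handles the bookkeeping by a case split on $b\ge\tfrac{\pi}{2}+r$ (where Corollary~\ref{first cor} gives $\dim\mathcal{K}\ge\ell-k+1$ directly) versus $b<\tfrac{\pi}{2}+r$ (where one takes $U_0=\mathcal{K}(0)^\perp\cap T_{\gamma(0)}N$ to ensure full index, propagates, and then bounds $\dim(U_b\cap T_{\gamma(b)}N)$ using $U_b+T_{\gamma(b)}N\subset(\mathcal{K}_b(b)\cap TN^\perp)^\perp$); the three Hingston--Kalish terms then sum correctly.
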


\begin{proof}
In what follows, all subspaces of Jacobi fields are understood to be
perpendicular to $\gamma$. Recall that $\Lambda_{N}$ denotes those Jacobi
fields corresponding to variations of $\gamma$ by geodesics that leave $N$
orthogonally at time zero. Let
\[
\mathcal{K}:=\mathrm{span}\left\{  \left.  J\in\Lambda_{N}\left(  0\right)
\text{ }\right\vert \text{ }J\left(  t\right)  =0\text{ for some }t\in\left(
0,b\right]  \,\right\}  .
\]
These are precisely the Jacobi fields that create focal points on $\left(
0,b\right]  ,$ but a given field can vanish multiple times, so we only have
the inequality
\[
\text{number of focal pts in }\left(  0,b\right]  \geq\dim\left(
\mathcal{K}\right)  .
\]

We divide the proof into two cases, depending on the length of $\gamma$:

\medskip

\noindent$\bullet$ \emph{Case 1:} If $b\geq\frac{\pi}{2}+r,$ then by Corollary
\ref{first cor},%
\begin{equation}
\dim\left(  \mathcal{K}\right)  \geq\ell-k+1. \label{big kern}%
\end{equation}
On the other hand, Theorem \ref{hingst-kalish thmm} allows us to estimate the
index of $\gamma$ as%
\begin{equation}
\inx(\gamma)\geq\dim\left(  \mathcal{K}\right)  -\dim(\mathcal{K}_{b}\left(
b\right)  \cap T_{\gamma\left(  b\right)  }N^{\perp}).
\label{eq:lower index bound}%
\end{equation}
Since $\mathcal{K}_{b}\perp\gamma^{\prime}(b)$, we have
\[
\mathcal{K}_{b}\left(  b\right)  \cap T_{\gamma\left(  b\right)  }N^{\perp
}\subset\gamma^{\prime}\left(  b\right)  ^{\perp}\cap T_{\gamma\left(
b\right)  }N^{\perp}.
\]
Together with (\ref{big kern}) and \eqref{eq:lower index bound}, this gives
us
\begin{align*}
\mathrm{index}\left(  \gamma\right)   &  \geq\ell-k+1-\left(  n-1-\ell\right)
\\
&  =2\ell-n-k+2,
\end{align*}
as desired.

\medskip

\noindent$\bullet$ \emph{Case 2: } Suppose $b<\frac{\pi}{2}+r$. If
$\dim\mathcal{K}\geq\ell-k+1$, then we can proceed as in Case 1 to conclude
that
\begin{align*}
\mathrm{Index}\left(  \gamma\right)  \geq\ell-n-k+2.
\end{align*}

\noindent If $\dim\mathcal{K}\leq\ell-k$, set
\[
U_{0}:=\mathcal{K}\left(  0\right)  ^{\perp}\cap T_{\gamma\left(  0\right)
}N,
\]
and note that, since $N$ is $\ell$-dimensional,%
\[
\dim\,U_{0}\geq\ell-\dim\mathcal{K}\geq k.
\]

Next we apply Lemma \ref{sing soon Ric_k Lemma} to the $k$--dimensional
subspaces $W_{0}$ of $U_{0}.$ To justify this, observe that since $U_{0}%
\perp\mathcal{K}(0)$, the space $\mathcal{V}$ in Lemma
\ref{sing soon Ric_k Lemma} is of full index. Thus hypothesis \eqref{tr hypo
ineq} implies there is
%
a subspace $U_{b}$ of $\mathcal{K}(b)^{\perp}$ with the same dimension as
$U_{0}$ so that for all $k$--dimensional subspaces $W\subset U_{b}$,
\begin{equation}
\Tr S_{b}|_{W}\leq k\cot\left(  \frac{\pi}{2}-r+b\right)  .
\label{fnnn tr inequal}%
\end{equation}


On the other hand, using the formula for $A$ in \eqref{HK A dfn}, for any
$k$-dimensional subspace $W$ in $U_{b}\cap T_{\gamma(b)}N$,
\[
\Tr A|_{W}=\Tr S_{b}|_{W}-\Tr S_{\gamma^{\prime}(b)}|_{W}\leq\Tr S_{b}%
|_{W}+|\Tr S_{\gamma^{\prime}(b)}|_{W}|
\]
Combining this with \eqref{tr hypo ineq} and (\ref{fnnn tr inequal}), for any
such $W$,
\begin{equation}
\Tr A|_{W}\leq k\left(  \cot\left(  \frac{\pi}{2}-r+b\right)  +\cot\left(
\frac{\pi}{2}-r\right)  \right)  . \label{Tr A est}%
\end{equation}
Together with Proposition \ref{lin alg prop}, this implies there is a subspace
$V$ of $U_{b}\cap T_{\gamma\left(  b\right)  }N$ with%
\begin{equation}
\dim V\geq\dim(U_{b}\cap T_{\gamma\left(  b\right)  }N)-k+1
\label{large subsp}%
\end{equation}
so that for all unit $v\in V,$
\[
\left\langle Av,v\right\rangle \leq\cot\left(  \frac{\pi}{2}-r+b\right)
+\cot\left(  \frac{\pi}{2}-r\right)  .
\]
Since $2r<b<\frac{\pi}{2}+r$ ,%
\begin{equation}
\left\langle Av,v\right\rangle <0, \label{A neq def}%
\end{equation}
for all $v\in V.$ Thus,
\begin{align}
\Index A  &  \geq\dim V\nonumber\\
&  \geq\dim(U_{b}\cap T_{\gamma\left(  b\right)  }N)-k+1.
\label{index A prelim}%
\end{align}

Observe that \eqref{index A prelim} also holds when $\dim(U_{b}\cap
T_{\gamma(b)}N)<k,$ since in that case \eqref{index A prelim} just reads
$\Index A\geq0$, which is obviously true. Therefore, the rest of the argument
is independent of the dimension of $U_{b}\cap T_{\gamma(b)}N$.

The next goal is to estimate $\dim(U_{b}\cap T_{\gamma(b)}N)$. To this aim,
observe that $U_{b}\perp\mathcal{K}(b)$, thus
\[
U_{b}+T_{\gamma(b)}N\subset\left(  \mathcal{K}(b)\cap T_{\gamma(b)}N^{\perp
}\right)  ^{\perp}.
\]
Since $\mathcal{K}_{b}\subset\mathcal{K}$, we have
\[
\dim(U_{b}+T_{\gamma(b)}N)\leq n-1-\dim(\mathcal{K}_{b}(b)\cap T_{\gamma
(b)}N^{\perp}).
\]


Using also $\dim U_{b} =\dim U_{0} \geq\ell-\dim\mathcal{K} ,$%
\begin{align*}
\mathrm{\dim}( U_{b}\cap T_{\gamma( b) }N)  &  =\dim U_{b} +\dim T_{\gamma( b)
}N -\dim( U_{b}+T_{\gamma\left(  b\right)  }N)\\
&  \geq\ell-\dim\mathcal{K} +\ell-\left(  n-1-\dim\left(  \mathcal{K}%
_{b}\left(  b\right)  \cap T_{\gamma\left(  b\right)  }N^{\perp}\right)
\right) \\
&  =2\ell-n+1-\dim\mathcal{K} +\dim( \mathcal{K}_{b}\left(  b\right)  \cap
T_{\gamma\left(  b\right)  }N^{\perp}) . \label{dim count}%
\end{align*}

Combining this with Theorem \ref{hingst-kalish thmm} and Inequality
(\ref{index A prelim}), we see that
\begin{align*}
\Index\gamma &  \geq\Index A+\dim\mathcal{K}-\dim(\mathcal{K}_{b}\left(
b\right)  \cap T_{\gamma\left(  b\right)  }N^{\perp})\\
&  \geq\dim(U_{b}\cap T_{\gamma\left(  b\right)  }N)-k+1+\dim\mathcal{K}%
-\dim(\mathcal{K}_{b}\left(  b\right)  \cap T_{\gamma\left(  b\right)
}N^{\perp})\\
&  \geq2\ell-n-k+2,
\end{align*}
as claimed.
\end{proof}

\begin{proof}
[Proof of Theorem \ref{quant conn with norm II thm}]The result follows from
Lemma \ref{lem:KeyLemma}, provided we can show that there is no geodesic
$\gamma:\left[  0,1\right]  \longrightarrow M$ with \textrm{len}$\left(
\gamma\right)  =b\leq2r$ and $\gamma(0),\gamma(1)\in N$ with $\gamma^{\prime
}(0),\gamma^{\prime}(1)\perp N.$

Since $N$ is connected and $\pi_{1}(M)=0$, the long exact homotopy sequence of
$\left(  M,N\right)  $ gives $\pi_{1}(M,N)=0$. So without loss of generality
we may assume that $2\ell-n-k+2\geq2.$

Let $\Omega_{N}$ be the space of piecewise smooth paths in $M$ that start and
end in $N$ and are parameterized on $\left[  0,1\right]  .$ Let $E:\Omega
_{N}\longrightarrow\mathbb{R}$ be the energy functional. By Lemma
\ref{lem:KeyLemma}$,$ for all $\tilde{r}>r,$ if $\sigma$ is a critical point
of $E$ with $\sqrt{2E\left(  \sigma\right)  }=\length\left(  \sigma\right)
\geq2\tilde{r},$ then
\begin{align*}
\Index\left(  \sigma\right)   &  \geq2\ell-n-k+2\\
&  \geq2.
\end{align*}
Combining this with Theorem \ref{peter morse them} it follows that $\pi
_{1}\left(  \Omega_{N},E^{-1}\left[  0,2\tilde{r}^{2}\right]  \right)  =0$.
Since $\pi_{1}(M,N)=0$ and $\pi_{0}(\Omega_{N})=0$, the long exact sequence,
\[
\pi_{1}\left(  \Omega_{N},E^{-1}\left[  0,2\tilde{r}^{2}\right]  \right)
\longrightarrow\pi_{0}\left(  E^{-1}\left[  0,2\tilde{r}^{2}\right]  \right)
\longrightarrow\pi_{0}\left(  \Omega_{N}\right)  ,
\]
yields $\pi_{0}\left(  E^{-1}\left[  0,2\tilde{r}^{2}\right]  \right)  =0$.

Now suppose that we have a geodesic $\gamma:[0,1]\longrightarrow M$ with
$\gamma(0),\gamma(1)\in N,$ \textrm{len}$\left(  \gamma\right)  =b\leq2r$, and
$\gamma^{\prime}\left(  0\right)  ,\gamma^{\prime}\left(  1\right)  \perp N.$
Choose $\tilde{r}\in\left(  r,\text{\textrm{focal radius}}\left(  N\right)
\right)  .$ Since $\pi_{0}(E^{-1}[0,2\tilde{r}^{2}])=0$, there is a homotopy
of $\gamma$ to a path in $N$ through paths in $\Omega_{N}$ of energy
$\leq2\tilde{r}^{2}$ and hence of length
\[
\length\left(  \gamma_{t}\right)  \leq\sqrt{2E\left(  \gamma_{t}\right)  }%
\leq\sqrt{2\cdot2\tilde{r}^{2}}=2\tilde{r}.
\]

On the other hand, by the Long Homotopy Lemma (\ref{Lem: long homot}), any
homotopy of $\gamma$ to a path in $N$ must pass through a curve whose length
is $\geq2\foc_{N}>2\tilde{r},$ yielding a contradiction. Thus there can be no
geodesic $\gamma:\left[  0,1\right]  \longrightarrow M$ with $\gamma
(0),\gamma(b)\in N,$ \textrm{len}$\left(  \gamma\right)  =b\leq2r<2\tilde{r}$
and $\gamma^{\prime}(0),\gamma^{\prime}(b)\perp N.$ Thus the inclusion
$N\hookrightarrow M$ is $\left(  2l-n-k+2\right)  $--connected as claimed.
\end{proof}

\section{Quantitative Version of Frankel's Theorem}

\label{sect:Frankel} Frankel's Theorem \cite{Frank1}, a classical result in
Riemannian geometry, asserts that in the presence of positive sectional
curvature, two closed totally geodesic submanifolds whose dimensions add to at
least the dimension of the ambient manifold necessarily intersect. The theorem
has a simple proof using the second variation formula. Since it appeared, the
theorem has been generalized in at least two different directions:

\begin{itemize}
\item The positive sectional curvature has been changed to positive $k$-th
Ricci curvature (see for instance \cite{KenXia} or \cite{Roven}).

\item The totally geodesic restriction on the submanifolds has been changed to
restrictions on their second fundamental forms; in this case, the conclusion
is usually relaxed to an estimate of the relative distance of the submanifolds
(as for instance in \cite{Mor} and \cite{Roven}).

\item Frankel showed in \cite{Frank2} that two minimal surfaces in a manifold
of positive Ricci curvature always intersect. This was generalized to
integrally positive Ricci curvature in \cite{PetWilh}: if the integral of the
portion of the Ricci curvature that is $<n-1$ is small, then the distance
between minimal submanifolds is small.
\end{itemize}

There are more references dealing with Frankel's Theorem, but we include only
the above as an illustration.

Our next result gives the quantitatively optimal generalization of Frankel's
Theorem to intermediate Ricci curvature. In particular, it assumes a weaker
condition on the second fundamental form of the submanifolds than the one
appearing in \cite{Roven}.

\begin{theorem}
\label{quant Frankel} Let $M$ be a complete Riemannian manifold with
$\Ric_{k}M\geq k$. Let $N$ and $\tilde{N}$ be two compact embedded
submanifolds of $M$ so that
\[
\dim N+\dim\tilde{N}\geq\dim M+k-1.
\]
Suppose that for some $r,\tilde{r}\in\left(  0,\frac{\pi}{2}\right)  $ and all
unit vectors $v$ normal to $N$, all $k$--dimensional subspaces $W\subset$
$TN$, all unit vectors $\tilde{v}$ normal to $\tilde{N},$ and all
$k$--dimensional subspaces $\tilde{W}\subset$ $T\tilde{N},$
\begin{equation}
|\Tr S_{v}|_{W}|\leq k\cot\left(  \frac{\pi}{2}-r\right)  \text{ and
}|\Tr S_{\tilde{v}}|_{\tilde{W}}|\leq k\cot\left(  \frac{\pi}{2}-\tilde
{r}\right)  . \label{tr est frankle}%
\end{equation}
Then
\begin{equation}
\mathrm{dist}(N,\tilde{N})\leq r+\tilde{r}. \label{Frank dist}%
\end{equation}

\end{theorem}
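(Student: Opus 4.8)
The plan is to argue by contradiction, following Lemma~\ref{lem:KeyLemma} with $\tilde N$ now in the role of the submanifold at the far endpoint and the two bounds in \eqref{tr est frankle} replacing the single bound \eqref{tr hypo ineq}. If $N$ and $\tilde N$ meet there is nothing to prove, so assume they are disjoint. Since $N,\tilde N$ are compact and $M$ is complete, $\dist(N,\tilde N)$ is realized by a unit speed minimizing geodesic $\gamma:[0,L]\to M$ with $L=\dist(N,\tilde N)$, $\gamma(0)\in N$, $\gamma(L)\in\tilde N$, perpendicular to $N$ at $\gamma(0)$ and to $\tilde N$ at $\gamma(L)$; being an absolute minimizer of length, hence of energy, on $\Omega(N,\tilde N)$, it is a critical point of $E$ of index $0$. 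Writing $\ell=\dim N$ and $\tilde\ell=\dim\tilde N$, we may assume $\tilde\ell\le n-1$ (else $\tilde N=M$ and $\dist(N,\tilde N)=0$), so $\ell\ge k$ by hypothesis. I will contradict $\inx(\gamma)=0$ by showing $\inx(\gamma)\ge\ell+\tilde\ell-n-k+2$, which is $\ge1$ because $\ell+\tilde\ell\ge n+k-1$.

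Let $\Lambda_N$ be the Lagrangian of Example~\ref{ex:submanifold_Lagrangian}, and set $\mathcal K=\mathrm{span}\{J\in\Lambda_N\mid J(t)=0\text{ for some }t\in(0,L]\}$ and $\mathcal K_L=\{J\in\Lambda_N\mid J(L)=0\}$. Since $\gamma$ is minimizing there are no $N$--focal points in $(0,L)$, and the number of $N$--focal points in $(0,L]$ is $\ge\dim\mathcal K$, so Theorem~\ref{hingst-kalish thmm} gives
\[
\inx(\gamma)\ \ge\ \Index A+\dim\mathcal K-\dim\!\big(\mathcal K_L(L)\cap T_{\gamma(L)}\tilde N^{\perp}\big);
\]
moreover $\mathcal K_L(L)\subset\gamma'(L)^{\perp}$ and $\gamma'(L)\perp T_{\gamma(L)}\tilde N$, so $\dim(\mathcal K_L(L)\cap T_{\gamma(L)}\tilde N^{\perp})\le n-1-\tilde\ell$. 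If $L\ge\frac\pi2+r$, Corollary~\ref{first cor} applied to $\Lambda_N$ with $U_0=T_{\gamma(0)}N$ (legitimate since $S_0=S_{\gamma'(0)}$ and the first bound of \eqref{tr est frankle} gives $\Tr S_0|_W\le k\cot(\frac\pi2-r)$ for every $k$--dimensional $W\subset U_0$) yields $\dim\mathcal K\ge\ell-k+1$; together with the display and $\Index A\ge0$ this gives $\inx(\gamma)\ge\ell+\tilde\ell-n-k+2$.

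It remains to treat $r+\tilde r<L<\frac\pi2+r$. If $\dim\mathcal K\ge\ell-k+1$ we conclude as above, so assume $\dim\mathcal K\le\ell-k$; then $U_0:=\mathcal K(0)^{\perp}\cap T_{\gamma(0)}N$ has $\dim U_0\ge\ell-\dim\mathcal K\ge k$. As in the proof of Lemma~\ref{lem:KeyLemma}, $U_0\perp\mathcal K(0)$ makes the relevant subspaces of full index, so Lemma~\ref{sing soon Ric_k Lemma} produces a subspace $U_L\subset\mathcal K(L)^{\perp}$ with $\dim U_L=\dim U_0$ and $\Tr S_L|_W\le k\cot(\frac\pi2-r+L)$ for every $k$--dimensional $W\subset U_L$. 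Combining this with \eqref{HK A dfn} and the second bound of \eqref{tr est frankle}, for every $k$--dimensional $W\subset U_L\cap T_{\gamma(L)}\tilde N$,
\[
\Tr A|_W=\Tr S_L|_W-\Tr S_{\gamma'(L)}|_W\ \le\ k\Big(\cot\big(\tfrac\pi2-r+L\big)+\cot\big(\tfrac\pi2-\tilde r\big)\Big),
\]
and this is $<0$: $\cot$ is strictly decreasing on $(0,\pi)$, $\cot(\frac\pi2+\tilde r)=-\cot(\frac\pi2-\tilde r)$, and $r+\tilde r<L<\frac\pi2+r$ places $\frac\pi2-r+L$ strictly between $\frac\pi2+\tilde r$ and $\pi$. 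Proposition~\ref{lin alg prop} then gives a subspace of $U_L\cap T_{\gamma(L)}\tilde N$ of dimension $\ge\dim(U_L\cap T_{\gamma(L)}\tilde N)-k+1$ whose associated fields in $\Lambda_{N,\tilde N}$ make $A$ negative definite, so $\Index A\ge\dim(U_L\cap T_{\gamma(L)}\tilde N)-k+1$ (trivially true when $\dim(U_L\cap T_{\gamma(L)}\tilde N)<k$). Finally, $U_L\perp\mathcal K(L)\supset\mathcal K_L(L)$ gives $U_L+T_{\gamma(L)}\tilde N\subset(\mathcal K_L(L)\cap T_{\gamma(L)}\tilde N^{\perp})^{\perp}$ inside $\gamma'(L)^{\perp}$, hence
\[
\dim(U_L\cap T_{\gamma(L)}\tilde N)\ \ge\ \ell+\tilde\ell-n+1-\dim\mathcal K+\dim\!\big(\mathcal K_L(L)\cap T_{\gamma(L)}\tilde N^{\perp}\big).
\]
Substituting the last two estimates into the index bound above, the $\dim\mathcal K$ and $\dim(\mathcal K_L(L)\cap T_{\gamma(L)}\tilde N^{\perp})$ terms cancel, leaving $\inx(\gamma)\ge\ell+\tilde\ell-n-k+2$.

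In all cases $\inx(\gamma)\ge\ell+\tilde\ell-n-k+2\ge1$, contradicting $\inx(\gamma)=0$; hence $\dist(N,\tilde N)\le r+\tilde r$. The main obstacle is the range $r+\tilde r<L<\frac\pi2+r$, but its bookkeeping---the full-index verification for Lemma~\ref{sing soon Ric_k Lemma}, the construction of $U_L$, the application of Proposition~\ref{lin alg prop}, and the dimension count---is essentially a line-by-line adaptation of the proof of Lemma~\ref{lem:KeyLemma}; the one genuinely new point is the elementary inequality $\cot(\frac\pi2-r+L)+\cot(\frac\pi2-\tilde r)<0$, which is exactly what the hypothesis $r+\tilde r<L<\frac\pi2+r$ delivers.
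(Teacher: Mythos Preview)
Your proof is correct and uses the same ingredients as the paper's---Corollary~\ref{first cor}, Lemma~\ref{sing soon Ric_k Lemma}, Proposition~\ref{lin alg prop}, and the Hingston--Kalish formula---organized as a contrapositive of the paper's argument. The paper's version is slightly more streamlined: rather than mimicking the two-case structure of Lemma~\ref{lem:KeyLemma}, it exploits minimality to note at once that $\mathcal K=\mathcal K_b$ and hence (from $\inx(\gamma)=0$ in the Hingston--Kalish formula) that $\Index A=0$ and $\mathcal K_b(b)\subset T\tilde N^{\perp}$; this makes the dimension count for a $k$--dimensional subspace of $H_b\cap T_{\gamma(b)}\tilde N$ direct and avoids tracking $\dim(\mathcal K_L(L)\cap T\tilde N^{\perp})$ through the estimates. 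Your route, by contrast, proves the stronger statement $\inx(\gamma)\ge \ell+\tilde\ell-n-k+2$ whenever $L>r+\tilde r$, which is a pleasant two-submanifold analogue of Lemma~\ref{lem:KeyLemma}.
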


\begin{proof}
If $N$ and $\tilde{N}$ intersect, then Inequality (\ref{Frank dist}) is
obvious, so assume that $N\cap\tilde{N}=\varnothing$. It follows that there is
a minimal geodesic segment $\gamma:\left[  0,b\right]  \longrightarrow M$
between $N$ and $\tilde{N}.$

Observe first that the length of $\gamma$ cannot exceed $\frac{\pi}{2}+r$;
otherwise, Inequality (\ref{tr est frankle}) and Corollary \ref{first cor}
give a field in $\Lambda_{N}$ that vanishes before time $b$. This contradicts
the minimality of $\gamma$, thus $b\leq\frac{\pi}{2}+r.$

As before we set
\begin{align*}
\mathcal{K}  &  \equiv\mathrm{span}\left\{  \left.  J\in\Lambda_{N}\text{
}\right\vert \text{ }J\left(  t\right)  =0\text{ for some }t\in\left(
0,b\right]  \right\}  ,\text{ and }\\
\mathcal{K}_{b}  &  \equiv\left\{  \left.  J\in\Lambda_{N}\text{ }\right\vert
\text{ }J\left(  b\right)  =0\text{ }\right\}  .
\end{align*}
Since $\gamma$ is minimal, no field in $\mathcal{K}$ vanishes before time $b,$
in particular
\begin{equation}
\mathcal{K=K}_{b}. \label{K is K b inequal}%
\end{equation}

Further, by Theorem \ref{hingst-kalish thmm},%
\begin{align*}
0  &  =\Index \gamma\\
&  =\Index A +\text{number of focal pts in }\left(  0,b\right]  -\dim(
\mathcal{K}_{b}(b) \cap T\tilde{N}^{\perp})\\
&  \geq\Index A +\dim\mathcal{K} -\dim( \mathcal{K}_{b}\left(  b\right)  \cap
T\tilde{N}^{\perp}) .
\end{align*}

But $\dim\mathcal{K} -\dim( \mathcal{K}_{b}\left(  b\right)  \cap T\tilde
{N}^{\perp}) \geq0,$ so%
\begin{equation}
\Index A =0 \text{ and } \dim\mathcal{K} =\dim( \mathcal{K}_{b}\left(
b\right)  \cap T\tilde{N}^{\perp}) . \label{HK consqs}%
\end{equation}

Combined with $\mathcal{K=K}_{b},$ we have%
\begin{equation}
\mathcal{K}_{b}\left(  b\right)  \subset T\tilde{N}^{\perp}. \label{K in norm}%
\end{equation}

To complete the proof, we show that $\Index A=0$ implies that $b\leq
r+\tilde{r}.$ To do this, use Inequality (\ref{tr est frankle}) and apply
Lemma \ref{sing soon Ric_k Lemma} with $W_{0}\equiv\mathcal{K}(0)^{\perp}\cap
T_{\gamma\left(  0\right)  }N.$ It follows that there is a $\left(  \dim
W_{0}\right)  $--dimensional subspace $H_{b}$ of $T_{\gamma\left(  b\right)
}M$ that is perpendicular to $\gamma^{\prime}\left(  b\right)  $ and
$\mathcal{K}\left(  b\right)  $ so that for all $k$--dimensional subspaces
$W\subset H_{b},$
\begin{equation}
\Tr\left(  S|_{W}\right)  \leq k\cot\left(  \frac{\pi}{2}-r+b\right)  .
\label{final tr}%
\end{equation}

Suppose for the moment that one of these $k$--dimensional subspaces is also
contained in $T_{\gamma\left(  b\right)  }\tilde{N}.$ Then Inequalities
(\ref{tr est frankle}) and (\ref{final tr}) together with the fact that
$\tilde{r}\in\left(  0,\frac{\pi}{2}\right)  $ give%
\begin{align*}
k\left(  \cot\left(  \frac{\pi}{2}-r+b\right)  +\cot\left(  \frac{\pi}%
{2}-\tilde{r}\right)  \right)   &  \geq\Tr\left(  S|_{W}-S_{\dot{\gamma}}%
|_{W}\right) \\
&  =\Tr\left(  A|_{W}\right)  .
\end{align*}

Since $\Index A=0$,
\begin{equation}
\cot\left(  \frac{\pi}{2}-r+b\right)  \geq-\cot\left(  \frac{\pi}{2}-\tilde
{r}\right)  . \label{cot inequal}%
\end{equation}

Since $b-r\leq\frac{\pi}{2},$ $r\in\left(  0,\frac{\pi}{2}\right)  ,$ and
$b>0,$
\[
0<\frac{\pi}{2}-r+b\leq\pi.
\]

Combining this with $\tilde{r}\in\left[  0,\frac{\pi}{2}\right)  \ $and
(\ref{cot inequal}), we can conclude that%
\[
\frac{\pi}{2}-r+b\leq\frac{\pi}{2}+\tilde{r}.
\]
This will give us
\begin{align*}
\mathrm{dist}(N,\tilde{N})  &  =b\\
&  \leq r+\tilde{r},
\end{align*}
once we prove there is a $k$--dimensional subspace $U\subset H_{b}\cap
T_{\gamma\left(  b\right)  }\tilde{N}.$

To do this we set
\[
m=\dim M,\quad n=\dim N,\quad\text{and}\quad\tilde{n}=\dim\tilde{N},
\]
and note that
\begin{align*}
\dim H_{b}  &  =\dim W_{0}\\
&  =\dim(T_{\gamma\left(  0\right)  }N\cap\mathcal{K}(0)^{\perp})\\
&  \geq n-\dim\mathcal{K}.
\end{align*}
Since both $H_{b}$ and $T_{\gamma\left(  b\right)  }\tilde{N}$ are
perpendicular to $\mathcal{K}_{b}\left(  b\right)  $ and $\gamma^{\prime
}\left(  b\right)  ,$
\begin{align*}
\dim(H_{b}+T_{\gamma\left(  b\right)  }\tilde{N})  &  \leq m-1-\dim
\mathcal{K}_{b}\left(  b\right) \\
&  =m-1-\dim\mathcal{K},\text{ by (\ref{K is K b inequal}).}%
\end{align*}
The previous two inequalities give us%
\begin{align*}
\dim(H_{b}\cap T_{\gamma\left(  b\right)  }\tilde{N})  &  =\dim H_{b}+\dim
T_{\gamma\left(  b\right)  }\tilde{N}-\dim(H_{b}+T_{\gamma\left(  b\right)
}\tilde{N})\\
&  \geq n-\dim\left(  \mathcal{K}\right)  +\tilde{n}-\left(  m-1-\dim\left(
\mathcal{K}\right)  \right) \\
&  \geq n+\tilde{n}-m+1\\
&  \geq k,
\end{align*}
as desired.
\end{proof}

\section{Examples}

\label{Sect: Examples} Our first example shows that the Ricci curvature
version of Theorem \ref{Intermeadiate Ricci Thm} is optimal in the sense that
the conclusion cannot be strengthened to say that $M$ is $2$--connected.

\begin{example}
Let $S_{3}^{2}$ be the $2$--sphere with constant curvature $3.$ Then%
\[
\mathrm{Ric}\left(  S_{3}^{2}\times S_{3}^{2}\right)  \equiv3\equiv
\mathrm{Ric}\left(  \mathbb{S}^{4}\right)  ,
\]
and for all $p\in S_{3}^{2}\times S_{3}^{2},$%
\[
\mathrm{conj}_{p}=\mathrm{inj}_{p}=\pi\sqrt{\frac{1}{3}}>\frac{\pi}{2}.
\]

As predicted by Theorem \ref{Intermeadiate Ricci Thm}, $S_{3}^{2}\times
S_{3}^{2}$ is $1$--connected, and since $S_{3}^{2}\times S_{3}^{2}$ is not
$2$--connected, the conclusion of the Ricci curvature version of Theorem
\ref{Intermeadiate Ricci Thm} cannot be strengthened.
\end{example}

The Octonionic projective plane provides an example of Theorem
\ref{Intermeadiate Ricci Thm} that is truly about intermediate Ricci
curvature, as well as an example showing that the conclusion about
connectivity cannot be strengthened.

\begin{example}
$\mathbb{O}P^{2}$ with the canonical metric whose curvatures are in $\left[
1,4\right]  $ is a $16$--manifold with $Ric_{9}\mathbb{O}P^{2}\geq12$ in which
each point has injectivity radius $\frac{\pi}{2}.$ If we multiply this metric
by $\frac{12}{9}$ we get
\[
Ric_{9}\left(  \sqrt{\frac{12}{9}}\mathbb{O}P^{2}\right)  \geq9
\]
and
\[
\mathrm{inj}\left(  \sqrt{\frac{12}{9}}\mathbb{O}P^{2}\right)  =\frac{\pi}%
{2}\frac{\sqrt{12}}{\sqrt{9}}=\frac{\sqrt{3}}{3}\pi>\frac{\pi}{2}.
\]

As predicted by Theorem \ref{Intermeadiate Ricci Thm}, $\mathbb{O}P^{2}$ is
$7$--connected; however, as it is not $8$--connected, we see that the
conclusion of Theorem \ref{Intermeadiate Ricci Thm} is optimal for
$16$--manifolds with $Ric_{9}\geq9$.
\end{example}

The next example shows that Theorem \ref{quant conn with norm II thm} is
optimal in the sense that the estimate (\ref{II hypo}) cannot be replaced with
a weak inequality.

\begin{example}
\label{Ex: cliff torus}View $\mathbb{S}^{3}$ as the unit sphere in
$\mathbb{C}\oplus\mathbb{C}.$ The Clifford torus%
\[
N=\left\{  \left.  \left(  z_{1},z_{2}\right)  \in\mathbb{C}^{2}\text{
}\right\vert \text{ }\left\vert z_{1}\right\vert =\left\vert z_{2}\right\vert
=\frac{1}{\sqrt{2}}\right\}
\]
has
\[
\left\vert \mathrm{II}_{N}\right\vert =1=\cot\left(  \frac{\pi}{2}-\frac{\pi
}{4}\right)  \text{ and }\foc_{N}=\frac{\pi}{4}.
\]

As predicted by Theorem \ref{quant conn with norm II thm}, the inclusion
$N\hookrightarrow\mathbb{S}^{3}$ is $1$--connected, but as it is not
$2$--connected, Inequality (\ref{II hypo}) cannot be be weakened to
$\foc_{N}\geq r.$
\end{example}

Our final example shows that Theorem \ref{quant Frankel} is optimal in the
sense that the estimate (\ref{Frank dist}) cannot be replaced with a strict inequality.

\begin{example}
As before take
\[
N=\left\{  \left.  \left(  z_{1},z_{2}\right)  \in\mathbb{C}^{2}\text{
}\right\vert \text{ }\left\vert z_{1}\right\vert =\left\vert z_{2}\right\vert
=\frac{1}{\sqrt{2}}\right\}  ,
\]
to be the Clifford Torus. Take
\[
\tilde{N}\equiv\left\{  \left.  \left(  z_{1},0\right)  \in\mathbb{C}%
^{2}\text{ }\right\vert \text{ }\left\vert z_{1}\right\vert =1\right\}
\]
to be a coordinate circle. Then
\[
\left\vert \mathrm{II}_{N}\right\vert =1=\cot\left(  \frac{\pi}{2}-\frac{\pi
}{4}\right)  \text{, }\left\vert \mathrm{II}_{\tilde{N}}\right\vert
=0=\cot\left(  \frac{\pi}{2}-0\right)  ,\text{ and }\dist(N,\tilde{N}%
)\leq\frac{\pi}{4}+0,
\]
as predicted by Theorem \ref{quant Frankel}; however, as $\dist(N,\tilde
{N})=\pi/4$, we see that \eqref{Frank dist} cannot be replaced with a strict inequality.
\end{example}

\bigskip


\section{Appendix: Proof of the Long Homotopy Lemma}

In this appendix, we will provide a proof of the Long Homotopy Lemma for
Submanifolds, as stated in Lemma \ref{Lem: long homot} in the introduction.
The proof has some points in contact with \cite[Lemma 4.3]{AbrMey-MSRI}. To
facilitate the reading, we will recall some of the notation used in Lemma
\ref{Lem: long homot}.

$N$ is a closed, embedded submanifold in a Riemannian manifold $M$, and
$\gamma:[0,b]\rightarrow M$ is a unit speed geodesic orthogonal to $N$ at its
endpoints $\gamma(0)$, $\gamma(b)\in N$. We denote by $\nu(N)$ the normal
bundle of $N$ in $M$, and by $N_{0}$ the zero section of $\nu(N)$; finally,
let
\[
\exp_{N}^{\perp}:\nu(N)\longrightarrow M
\]
be the normal exponential map.

Let $D(N,\foc_{N})\subset\nu(N)$ be the set of vectors normal to $N$ with
length less than $\foc_{N}$. Since $\exp_{N}^{\perp}:D(N,\foc_{N}%
)\longrightarrow M$ is a local diffeomorphism, $g^{\ast}:={\exp_{N}^{\perp}%
}^{\ast}g$ is a Riemannian metric on $D(N,\foc_{N})$. In this metric,
$D(N,\foc_{N})$ agrees with the $\foc_{N}$-neighborhood of $N_{0}$, the zero
section of $\nu(N)$. We will denote such $g^{\ast}$-neighborhoods by
$B(N_{0},r)$ for $r\geq0$.

\begin{lemma}
\label{lem:lifts of paths} Let $\alpha:[0,b]\rightarrow M$ be a curve with
$\alpha(0)\in N$ and $\length(\alpha)=\ell<\foc_{N}$. Given a point $\bar
{q}\in N_{0}$ with $\pi(\bar{q})=\alpha(0)$, there is a unique lift
$\bar{\alpha}:[0,b]\rightarrow\nu(N)$ with $\bar{\alpha}(0)=\bar{q}$;
moreover, $\bar{\alpha}$ is entirely contained in $\overline{B(N_{0},\ell)}$.
\end{lemma}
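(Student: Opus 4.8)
The plan is to lift the curve $\alpha$ through the local diffeomorphism $\exp_N^\perp$ restricted to $D(N,\foc_N)$, using the fact that $g^* = (\exp_N^\perp)^* g$ makes this restriction a local isometry onto its image, and then to use a continuity/connectedness argument to show the lift stays inside $\overline{B(N_0,\ell)}$.

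Here is the structure I would follow. First, set up the lifting as a maximal-domain argument: let $[0,c)$ (with $c \le b$) be the maximal half-open interval on which a lift $\bar\alpha$ with $\bar\alpha(0) = \bar q$ exists and stays in $D(N,\foc_N)$; since $\exp_N^\perp$ is a local diffeomorphism on $D(N,\foc_N)$, such a lift exists for small times and is unique wherever it exists (two lifts agreeing at a point agree on a neighborhood, and the agreement set is open and closed in the connected interval). Second, control the $g^*$-distance of $\bar\alpha(t)$ to $N_0$: because $\pi_{\nu(N)}: (\nu(N), g^*) \to ?$ — more precisely, because $\exp_N^\perp$ is a local isometry from $(D(N,\foc_N), g^*)$ to $(M,g)$, the $g^*$-length of $\bar\alpha|_{[0,t]}$ equals the $g$-length of $\alpha|_{[0,t]}$, which is at most $\ell < \foc_N$. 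Since $\bar\alpha(0) = \bar q \in N_0$, this forces $\dist_{g^*}(\bar\alpha(t), N_0) \le \ell$ for every $t < c$, i.e. $\bar\alpha([0,c)) \subset \overline{B(N_0,\ell)}$.

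Third, close the argument: $\overline{B(N_0,\ell)}$ is a closed subset of $D(N,\foc_N)$ that is complete in the metric $g^*$ — indeed it is the closed $\ell$-ball around the (compact, since $N$ is closed and... ) zero section, and $\ell < \foc_N$ keeps it bounded away from the boundary of $D(N,\foc_N)$ — so as $t \to c^-$ the curve $\bar\alpha(t)$, being of bounded $g^*$-length, is Cauchy and converges to a point $\bar\alpha(c) \in \overline{B(N_0,\ell)} \subset D(N,\foc_N)$. If $c < b$ we could then extend the lift past $c$ using the local diffeomorphism at $\bar\alpha(c)$, contradicting maximality; hence $c = b$, the lift is defined on all of $[0,b]$, it is unique, and it lies in $\overline{B(N_0,\ell)}$. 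Uniqueness on the closed interval follows from the open-and-closed argument once existence is known.

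The main obstacle I expect is the completeness/properness point needed to pass to the limit at $t = c$: one must be careful that $\overline{B(N_0,\ell)}$, with the pulled-back metric $g^*$, is genuinely a complete metric space (equivalently that a $g^*$-Cauchy sequence in it cannot escape to the frontier of $D(N,\foc_N)$), and this is exactly where the strict inequality $\ell < \foc_N$ is used. A clean way to handle this is to note that $\exp_N^\perp$ restricted to $D(N,\foc_N)$ is a local isometry, that $\overline{B(N_0,\ell)}$ is at $g^*$-distance $\foc_N - \ell > 0$ from $\partial D(N,\foc_N)$, and that the zero section $N_0$ is compact (as $N$ is closed — here one should read "closed" as compact, or argue locally), so closed bounded sets behave well; the curve then converges by the usual bounded-length-implies-Cauchy estimate. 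Everything else is the standard covering-space-style lifting bookkeeping.
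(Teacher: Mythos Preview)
Your argument is correct and is essentially the same idea as the paper's, just carried out in more detail. The paper's proof is a two-line sketch: it observes that $\overline{B(N_0,\ell)}$ can be covered by open sets $U_\alpha$ on which $\exp_N^\perp$ is an isometry onto its image, and then says the rest ``follows the standard proof for lifts of curves in covering spaces.'' Your maximal-interval-plus-limit argument is one of the standard ways to implement that lifting (the other being a Lebesgue-number subdivision of $[0,b]$), and the crucial point---that the $g^*$-length bound forces $\bar\alpha$ to remain in $\overline{B(N_0,\ell)}$ and hence inside the domain of the local diffeomorphism---is exactly what both arguments rely on. Your identification of the completeness/compactness of $\overline{B(N_0,\ell)}$ as the spot that needs care is apt; in the paper's applications $N$ is compact, so $\overline{B(N_0,\ell)}$ is a compact disk bundle and the limit step goes through without difficulty.
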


\begin{proof}
There is an open cover $\{ U_{\alpha}\}_{\alpha}$ of $\overline{B\left(
N_{0},\ell\right)  }$ so that each map
\[
\left.  \exp_{N}^{\perp}\right\vert _{U_{\alpha}}:( U_{\alpha},g^{\ast})
\longrightarrow( \exp_{N}^{\perp}(U_{\alpha}),g)
\]
is an isometry for all $\alpha$. The argument then follows the standard proof
for lifts of curves in covering spaces.
\end{proof}

The next lemma considers homotopies
\[
H:[0,b]\times\lbrack0,1]\rightarrow M,\quad H=H(t,s),
\]
sending the three sides
\[
\{0\}\times\lbrack0,1],\quad\lbrack0,b]\times\{0\},\quad\{b\}\times
\lbrack0,1]
\]
of the rectangle $[0,b]\times\lbrack0,1]$ into $N$. To simplify the writing,
we will use $\sqcup$ to denote the union of the above three sides. We will
also use $H_{s}$ to denote the maps
\[
H_{s}:[0,b]\rightarrow M,\quad t\rightarrow H(t,s)
\]
for $s\in\lbrack0,1]$. Finally, define the function $h(s)=\length(H_{s})/2$.
If $H$ has enough regularity (e.g, smooth), $h:[0,1]\rightarrow\mathbb{R}$ is continuous.

\begin{lemma}
\label{short homotopy lemma} Let $H:[0,b]\times\lbrack0,1]\rightarrow M$ be a
homotopy such that

\begin{itemize}
\item $H(\sqcup)\subset N$ and

\item $\length (H_{s})< 2\foc_{N}$ for every $0\leq s\leq1$.
\end{itemize}

Then $H$ has a lift
\[
\tilde{H}:[0,b] \times[ 0,1] \longrightarrow B(N_{0},\foc_{N}) \subset\nu( N)
\]
with $\tilde{H}(\sqcup)$ contained in the zero section $N_{0}$.
\end{lemma}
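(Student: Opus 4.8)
The plan is to lift $H$ by lifting each path $H_s$ from \emph{both} of its endpoints and then showing that the two partial lifts agree in the middle; the matching is not a local fact and will be forced by a connectedness argument in the parameter $s$, anchored at $s=0$ where $H_0$ lies in $N$. A preliminary normalization: writing $\ell(s):=\length(H_s)<2\foc_N$, the function $t\mapsto\length(H_s|_{[0,t]})$ is continuous, nondecreasing, with total variation $\ell(s)$, so after a routine reparametrization of the $t$--variable by a homeomorphism of $[0,b]$ fixing the endpoints and depending continuously on $s$, I may assume
\[
\length\bigl(H_s|_{[0,b/2]}\bigr)=\length\bigl(H_s|_{[b/2,b]}\bigr)=\ell(s)/2<\foc_N\quad\text{for all }s.
\]
Since $\ell(s)/2<\foc_N$ we have $\overline{B(N_0,\ell(s)/2)}\subset D(N,\foc_N)=B(N_0,\foc_N)$, where $\exp_N^{\perp}$ is a local isometry of $(D(N,\foc_N),g^{\ast})$ onto its image.

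Next, for each fixed $s$, I would apply Lemma \ref{lem:lifts of paths} to the left half $H_s|_{[0,b/2]}$ with initial point the zero vector $0_{H_s(0)}\in N_0$, obtaining a lift $\widetilde H^{+}_{s}\colon[0,b/2]\to\overline{B(N_0,\ell(s)/2)}$ with $\widetilde H^{+}_{s}(0)=0_{H_s(0)}$; and apply the same lemma to the reversed right half $v\mapsto H_s(b-v)$ with initial point $0_{H_s(b)}$, then reverse back, obtaining a lift $\widetilde H^{-}_{s}\colon[b/2,b]\to\overline{B(N_0,\ell(s)/2)}$ with $\widetilde H^{-}_{s}(b)=0_{H_s(b)}$. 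By the same chart argument that proves Lemma \ref{lem:lifts of paths} (uniform continuity of lifts through the isometry charts covering $\overline{B(N_0,\ell(s)/2)}$), both $\widetilde H^{+}_{s}$ and $\widetilde H^{-}_{s}$ depend continuously on $s$; in particular $s\mapsto\widetilde H^{+}_{s}(b/2)$ and $s\mapsto\widetilde H^{-}_{s}(b/2)$ are continuous curves in $\nu(N)$ lying over $H_s(b/2)$.

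The crux is the claim that $\widetilde H^{+}_{s}(b/2)=\widetilde H^{-}_{s}(b/2)$ for every $s$. Let $Z\subseteq[0,1]$ be the set of $s$ for which this holds. It is closed by the continuity just noted. It contains $0$: since $H(\sqcup)\subset N$, the curve $H_0$ lies in $N$, so $u\mapsto 0_{H_0(u)}$ is a lift of $H_0$ into the zero section with the prescribed endpoints, and the uniqueness in Lemma \ref{lem:lifts of paths} forces $\widetilde H^{+}_{0}=0_{H_0(\cdot)}$ and $\widetilde H^{-}_{0}=0_{H_0(\cdot)}$, so $\widetilde H^{+}_{0}(b/2)=0_{H_0(b/2)}=\widetilde H^{-}_{0}(b/2)$. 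And $Z$ is open: if $s_0\in Z$, then $\exp_N^{\perp}$ is a diffeomorphism on some neighborhood $U$ of the common value $p_0:=\widetilde H^{+}_{s_0}(b/2)=\widetilde H^{-}_{s_0}(b/2)$, and for $s$ near $s_0$ both $\widetilde H^{+}_{s}(b/2)$ and $\widetilde H^{-}_{s}(b/2)$ lie in $U$ and map under $\exp_N^{\perp}$ to the same point $H_s(b/2)$; injectivity of $\exp_N^{\perp}|_U$ then forces them to coincide, so $s\in Z$. Since $[0,1]$ is connected, $Z=[0,1]$.

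Finally I would assemble the pieces: define $\widetilde H_s\colon[0,b]\to\overline{B(N_0,\ell(s)/2)}$ by $\widetilde H_s=\widetilde H^{+}_{s}$ on $[0,b/2]$ and $\widetilde H_s=\widetilde H^{-}_{s}$ on $[b/2,b]$; this is well defined and continuous because $Z=[0,1]$, it lifts $H_s$ since each half does, and it takes values in $\overline{B(N_0,\ell(s)/2)}\subset B(N_0,\foc_N)$. Then $\widetilde H(t,s):=\widetilde H_s(t)$ is a continuous lift $\widetilde H\colon[0,b]\times[0,1]\to B(N_0,\foc_N)\subset\nu(N)$ of $H$ (joint continuity from continuity of $s\mapsto\widetilde H^{\pm}_{s}$ in the uniform topology). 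Along $\sqcup$: $\widetilde H(0,s)=0_{H_s(0)}$ and $\widetilde H(b,s)=0_{H_s(b)}$ lie in $N_0$ by construction, and $\widetilde H(\cdot,0)=\widetilde H_0=0_{H_0(\cdot)}$ lies in $N_0$ by the computation in the previous paragraph, so $\widetilde H(\sqcup)\subset N_0$. The hard part, as indicated, is the equality $Z=[0,1]$ — equivalently, the conceptual point that since $\exp_N^{\perp}$ is only a local diffeomorphism, an individual $H_s$ of length $\geq\foc_N$ cannot be lifted directly and one must lift from both ends, with the closing-up in the middle genuinely requiring the continuity-and-connectedness argument rooted at $s=0$; a lesser technical nuisance is justifying the reparametrization in the first paragraph when $H_s$ has stretches of zero speed, which is routine.
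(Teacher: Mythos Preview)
Your proposal is correct and follows essentially the same strategy as the paper: split each curve $H_s$ at its arc-length midpoint, lift the two halves from their respective endpoints in $N_0$ using Lemma~\ref{lem:lifts of paths}, and run a clopen argument in $s$ (anchored at $s=0$, where $H_0\subset N$ lifts to the zero section) to force the two partial lifts to match in the middle. The only cosmetic difference is that the paper keeps the splitting point variable, $h(s)=\length(H_s)/2$, whereas you first reparametrize so that the split always occurs at $t=b/2$; your own caveat about zero-speed stretches is exactly the place where this reparametrization needs a word of care, but it does not affect the substance of the argument.
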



\begin{proof}
We can assume, without loss of generality, that $H$ is smooth. Thus, as
observed above, $h(s)$ is a continuous function. The condition on the length
of the curves $H_{s}$ implies that there is a number $r$ with $0<r<\foc_{N}$
so that the length of any $H_{s}$ does not exceed $2r$.

Consider the curves
\[
L_{s}:[0,h(s)]\rightarrow M,\qquad L_{s}(t)=H(t,s),
\]
and
\[
R_{s}:[h(s),b]\rightarrow M,\qquad R_{s}(t)=H(t,s).
\]
By the choice of $h(s)$, the lengths of $L_{s}$ and $R_{s}$ do not exceed $r$.
Lemma \ref{lem:lifts of paths} gives us lifts
\[
\bar{L}_{s}:[0,h(s)]\rightarrow\nu(S),\qquad\bar{R}_{s}:[h(s),b]\rightarrow
\nu(S)
\]
sending $\sqcup$ continuously into $N_{0}$. Moreover, the uniqueness of the
lifts together with their being constructed using local inverses of $\exp
_{N}^{\perp}|_{U_{\alpha}}$ shows that $\bar{L}(t,s):=\bar{L}_{s}(t)$ and
$\bar{R}(t,s):=\bar{R}_{s}(t)$ are continuous in their respective domains.

We claim that the map $\bar{H}(t,s)$ defined by
\[
\bar{H}(t,s):=%
\begin{cases}
\bar{L}(t,s),\quad\text{ for }0\leq t\leq h(s),\\
\bar{R}(t,s),\quad\text{ for }h(s)\leq t\leq1,
\end{cases}
\]
obtained by pasting together $\bar{L}$ and $\bar{R}$ is continuous. By the
Pasting Lemma we just need to show that $\bar{L}(h(s),s)=\bar{R}(h(s),s)$ for
all $s\in\lbrack0,1]$. This can be done using a clopen argument; define
\[
A:=\{s\in\lbrack0,1]\,:\,\bar{L}(h(s^{\prime}),s^{\prime})=\bar{R}%
(h(s^{\prime}),s^{\prime})\text{ for all }s^{\prime}\leq s\,\},
\]
and observe that $0\in A,$ and $A$ is clearly closed. It remains to see that
it is also open. This is a consequence of the way lifts are constructed.
Indeed, suppose $s_{0}\in A$, let $p_{0}=H(h(s_{0}),s_{0})$, and choose some
$U_{\alpha}$ containing $\bar{L}(h(s_{0}),s_{0})=\bar{R}(h(s_{0}),s_{0})$
where $\exp_{N}^{\perp}$ is a diffeomorphism. Then $p_{0}$ lies in $\exp
_{N}^{\perp}(U_{\alpha})$, so
\[
\bar{L}(h(s),s)=\exp_{N}^{\perp}|_{U_{\alpha}}^{-1}\circ H(h(s),s),\hspace
{0.14in}\text{and}\hspace{0.14in}\bar{R}(h(s),s)=\exp_{N}^{\perp}|_{U_{\alpha
}}^{-1}\circ H(h(s),s),
\]
for $s$ close to $s_{0}$. Thus $A$ is open, and by connectedness, $A=[0,1]$.
\end{proof}


\begin{lemma}
\label{no leift lemma} Let $\gamma:[0,b]\rightarrow M$ be a unit speed
geodesic with $\gamma(0)$, $\gamma(b)\in N$, $b<2\foc_{N}$, and $\gamma
^{\prime}(0)$, $\gamma^{\prime}(b)\perp N$. Then there is no lift of $\gamma$
to a curve $\hat{\gamma}:[0,b]\longrightarrow B(N_{0},\foc_{N})$ with
$\hat{\gamma(0)}$, $\hat{\gamma(b)}\in N_{0}$.
\end{lemma}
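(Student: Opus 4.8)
The plan is to argue by contradiction, exploiting that $\exp_{N}^{\perp}:D(N,\foc_{N})\longrightarrow M$ is a local diffeomorphism, so that lifts of a given path with a given initial point are unique. Suppose $\hat{\gamma}:[0,b]\rightarrow B(N_{0},\foc_{N})=D(N,\foc_{N})$ is a lift of $\gamma$ with $\hat{\gamma}(0),\hat{\gamma}(b)\in N_{0}$. First I would identify $\hat{\gamma}(0)$: since $\exp_{N}^{\perp}$ restricted to the zero section $N_{0}$ is just the inclusion $N\hookrightarrow M$, and $\exp_{N}^{\perp}(\hat{\gamma}(0))=\gamma(0)$ with $\hat{\gamma}(0)\in N_{0}$, necessarily $\hat{\gamma}(0)$ is the zero vector $0_{\gamma(0)}$ over $\gamma(0)$.

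Next I would compare $\hat{\gamma}$ with the tautological radial path $\beta(t):=t\,\gamma^{\prime}(0)$ in $\nu(N)$, which makes sense because $\gamma^{\prime}(0)\perp N$. For $t\in[0,\min\{b,\foc_{N}\})$ one has $|\beta(t)|=t<\foc_{N}$, hence $\beta(t)\in D(N,\foc_{N})$, and $\exp_{N}^{\perp}(\beta(t))=\exp_{\gamma(0)}(t\,\gamma^{\prime}(0))=\gamma(t)$; thus $\beta$ restricted to this interval is a lift of $\gamma$ starting at $0_{\gamma(0)}=\hat{\gamma}(0)$. By the standard uniqueness of lifts through a local diffeomorphism (the set of times where two continuous lifts agree is closed, since the target is Hausdorff, and open, since $\exp_{N}^{\perp}$ is a local diffeomorphism, and it contains $0$), we get $\hat{\gamma}=\beta$ on $[0,\min\{b,\foc_{N}\})$; in particular $|\hat{\gamma}(t)|=t$ there, and hence also at $t=\min\{b,\foc_{N}\}$ by continuity of $\hat{\gamma}$ and of $v\mapsto|v|$.

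Finally I would split into the two possibilities for $\min\{b,\foc_{N}\}$. If $b\geq\foc_{N}$, then $\hat{\gamma}(\foc_{N})$ is defined and lies in $D(N,\foc_{N})$, yet $|\hat{\gamma}(\foc_{N})|=\foc_{N}$, contradicting the defining inequality $|v|<\foc_{N}$ for vectors of $D(N,\foc_{N})$. If $b<\foc_{N}$, then $|\hat{\gamma}(b)|=b>0$, so $\hat{\gamma}(b)$ is not a zero vector and therefore not in $N_{0}$, contradicting the hypothesis on the lift. Either way, no such $\hat{\gamma}$ exists. The only step needing any care is the uniqueness-of-lifts argument together with the passage to the closed endpoint $t=\min\{b,\foc_{N}\}$ by continuity; everything else is immediate from the definitions of $D(N,\foc_{N})$, $N_{0}$, and the normal exponential map. (Note the hypotheses $\gamma(b)\in N$ and $\gamma^{\prime}(b)\perp N$ are not actually used here; they are retained only to match the setting of the Long Homotopy Lemma.)
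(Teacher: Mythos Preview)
Your proof is correct and follows essentially the same approach as the paper's: both identify the lift with the radial path $t\mapsto t\gamma'(0)$ via uniqueness of lifts through the local diffeomorphism $\exp_N^\perp$, and then reach a contradiction. The only cosmetic difference is that you split into the cases $b<\foc_N$ and $b\geq\foc_N$ and argue directly with the bundle norm, whereas the paper avoids the case split by choosing any $c\in(b/2,\foc_N)$ and using the triangle inequality in the pullback metric $g^*$ to show $\dist_{g^*}(\tilde\gamma(b),N_0)\geq c-(b-c)>0$.
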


\begin{proof}
Let $\tilde{\gamma}$ be a lift of $\gamma$ to $\nu(N)$ with $\tilde{\gamma
}(0)
\in N_{0}$. Since $\exp^{\perp}$ is a local diffeomorphism in $B(N_{0}%
,\foc_{N})$, we have
\[
\tilde{\gamma}(t)=t\gamma^{\prime}(0)
\]
in any interval $[0,c]$ with $b/2<c<\foc_{N}$.
%
Thus,
\[
\left\vert \tilde{\gamma}\left(  c\right)  \right\vert =\dist\left(
\tilde{\gamma}\left(  c\right)  ,N_{0}\right)  =c>\frac{b}{2}.
\]
Since $\dist(\tilde{\gamma}(c),\tilde{\gamma}(b))<b-c<b/2,$ it follows that
$\tilde{\gamma}\left(  b\right)  $ cannot lie in $N_{0}.$

\end{proof}

\begin{proof}
[Proof of Lemma \ref{Lem: long homot} (The long homotopy Lemma)]By Lemma
\ref{no leift lemma}, $H$ has no lift to a homotopy with values in $B\left(
N_{0},\foc_{N}\right)  $ whose curve end points are in $N_{0}.$ So by Lemma
\ref{short homotopy lemma}, for some $s\in\left[  0,1\right]  $ the curve
\[
t\longmapsto H_{s}\left(  t\right)
\]
must be longer than $2\foc_{N}.$
\end{proof}


\begin{thebibliography}{99}                                                                                               %


\bibitem {AbrMey-MSRI}U. Abresch and W. Meyer, \emph{A Injectivity radius
estimates and sphere theorems}. Comparison geometry (Berkeley, CA, 1993--94),
1--47, Math. Sci. Res. Inst. Publ., 30, Cambridge Univ. Press, Cambridge, 1997.

\bibitem {AbrMey}U. Abresch and W. Meyer, \emph{A sphere theorem with a
pinching constant below }$\frac{1}{4},$ J. Diff. Geom. 44 (1996) 214--261.

\bibitem {CheegGrm}J. Cheeger and D. Gromoll, \emph{On the lower bound for the
injectivity radius of }$1/4$\emph{--pinched Riemannian manifolds,} J. Diff.
Geom. 15 (1980) 437--442.

\bibitem {doCarm}M. do Carmo, \emph{Riemannian Geometry}, Mathematics: Theory
and Applications. Birkh\"{a}user Boston, Inc., Boston, MA, (1992).

\bibitem {Esch}J.H. Eschenburg, \emph{Comparison theorems and hypersurfaces},
Manuscripta Math. 59 (1987), no 3, 295-323.

\bibitem {EschHein}J.H. Eschenburg and E. Heintze, \emph{Comparison theory for
Riccati equations,} Manuscripta Math. 68 (1990), no. 2, 209--214.



\bibitem {FangRong}F. Fang, S. Mendon\c{c}a, X. Rong, \emph{A Connectedness
principle in the geometry of positive curvature, }Comm. Anal. and Geom.13
(2005), No. 4, 671-695.

\bibitem {Frank1}T. Frankel, \emph{Manifolds with positive curvature}, Pacific
J. Math. 11 (1961), 165--174.

\bibitem {Frank2}T. Frankel, \emph{On the fundamental group of a compact
minimal submanifold,} Ann. Math., 83 (1966), 68-73.

\bibitem {GonGui}D. Gonz\'{a}lez-\'{A}lvaro and L. Guijarro, \emph{Soft
restrictions on positively curved Riemannian submersions}, Journal of
Geometric Analysis, 26 (2016), no. 2, 1442--1452.







\bibitem {gw}D. Gromoll and G. Walschap, \emph{Metric foliations and
curvature}, Progress in Mathematics 268, Birkh\"{a}user Verlag, Basel, 2009.

\bibitem {GuamWilh}D. Gumaer and F. Wilhelm, \emph{On Jacobi field splitting
theorems}, Differential Geometry and its Applications, 37 (2014) 109--119.

\bibitem {GuijWilh}L. Guijarro and F. Wilhelm, \emph{Focal radius, rigidity,
and lower curvature bounds, }Proceedings of the London Mathematical Society
116 (2018), 1519-1552.

\bibitem {GuijWilh2}L. Guijarro and F. Wilhelm, \emph{Restrictions on
submanifolds via focal radius bounds}, to appear in Mathematical Research Letters; available at https://arxiv.org/pdf/1606.04121v2.pdf

\bibitem {Hatch}A. Hatcher, \emph{Algebraic Topology, }Cambridge University
Press, 2002.



\bibitem {HingKa}N. Hingston and D. Kalish, \emph{The Morse index theorem in
the degenerate endmanifold case}, Proc. Amer. Math. Soc., 118, no. 2, (1993) 663-668.

\bibitem {Kal}D. Kalish, \emph{The Morse index theorem where the ends are
submanifolds, }Trans. Amer. Math. Soc. 308, No. 1, (1988) 341--348.

\bibitem {KenXia}K. Kenmotsu, C. Xia, \emph{Hadamard-Frankel type theorems for
manifolds with partially positive curvature}, Pacific J. Math. 176 (1996), no.
1, 129--139.







\bibitem {Miln}J. Milnor, \emph{Morse Theory}, Princeton University Press,
Princeton, 1963.

\bibitem {Mor}J.M. Morvan, \emph{Distance of two submanifolds of a manifold
with positive curvature}, Rend. Mat. (7) 3 (1983), no. 2, 357--366.

\bibitem {MurWilh}T. Murphy and F. Wilhelm, \emph{Random manifolds have no
totally geodesic submanifolds,} Michigan Mathematical Journal, 68 (2019), 323--335.




\bibitem {Pet}P. Petersen, \emph{Riemannian Geometry,} $3^{rd}$ Ed., New York:
Springer Verlag, 2016.

\bibitem {PetWilh}P. Petersen and F. Wilhelm, \emph{On Frankel's theorem},
Bulletin of the Canadian Mathematical Society, 46 (2003) 130-139.

\bibitem {Roven}V. Rovenskii, \emph{On the role of partial Ricci curvature in
the geometry of submanifolds and foliations, }Ann. Pol. Math. 68 (1998) 61--82.





\bibitem {Shen}Z. Shen, \emph{A sphere theorem for positive Ricci curvature},
Indiana Univer. Math. J. 38, (1989), 229-233.





\bibitem {Wilk1}B. Wilking, \emph{A duality theorem for Riemannian foliations
in nonnegative sectional curvature}, Geom. Funct. Anal. 17 (2007), 1297--1320.



\bibitem {Wilk3}B.Wilking, \emph{Torus actions on manifolds of positive
sectional curvature}, Acta Math., 191 (2003), 259 297.

\bibitem {Wu}H. Wu, \emph{Manifolds of partially positive curvature}, Indiana
Univ. Math. J. 36, (1987), 525-548.
\end{thebibliography}
\end{document}